\numberwithin{equation}{section}
\numberwithin{figure}{section}
\theoremstyle{plain}
\newtheorem{thm}{\protect\theoremname}[section]
  \theoremstyle{plain}
  \newtheorem{lem}[thm]{\protect\lemmaname}
  \theoremstyle{definition}
  \newtheorem{example}[thm]{\protect\examplename}
  \theoremstyle{remark}
  \newtheorem{rem}[thm]{\protect\remarkname}
  \theoremstyle{plain}
  \newtheorem{cor}[thm]{\protect\corollaryname}
  \theoremstyle{plain}
  \newtheorem{prop}[thm]{\protect\propositionname}
\providecommand{\corollaryname}{Corollary}
  \providecommand{\examplename}{Example}
  \providecommand{\lemmaname}{Lemma}
  \providecommand{\propositionname}{Proposition}
  \providecommand{\remarkname}{Remark}
\providecommand{\theoremname}{Theorem}
\def\quot{/\!\!/}
\def\C{\mathbb{C}}
\def\Z{\mathbb{Z}}
\def\R{\mathbb{R}}
\def\M{\mathcal{M}}
\def\Rep{\mathcal{R}}
\def\hom{\mathsf{Hom}}
\def\SL{\mathrm{SL}}
\def\GL{\mathrm{GL}}
\def\PGL{\mathrm{PGL}}
\def\SU{\mathrm{SU}}
\def\SO{\mathrm{SO}}
\def\Sp{\mathrm{Sp}}
\renewcommand{\textendash}{--}
\title[Mixed Hodge structures on character varieties]{Mixed Hodge structures on character varieties of nilpotent groups}
\author[C. Florentino]{Carlos Florentino}
\address{Departamento de Matem\'{a}tica, Faculdade de Ci\^{e}ncias, Univ. 
de Lisboa,  Edf. C6, Campo Grande 1749-016 Lisboa, Portugal}
\email{caflorentino@fc.ul.pt}
\author[S. Lawton]{Sean Lawton}
\address{Department of Mathematical Sciences, George Mason University, 4400 University Drive, Fairfax, Virginia 22030, USA}
\email{slawton3@gmu.edu}
\author[J. Silva]{Jaime Silva}
\address{Departamento Matem\'{a}tica,  ISEL - Instituto Superior de Engenharia de Lisboa, Rua Conselheiro Em\'{i}dio Navarro, 1, 1959-007 Lisboa, Portugal}
\email{jaime.a.m.silva@gmail.com}
\subjclass[2020]{Primary 14M35, 32S35; Secondary 14L30 }  
\keywords{character variety, mixed Hodge structures, nilpotent group, Hodge polynomial}
\begin{document}

\begin{abstract}
Let $\hom^{0}(\Gamma,G)$ be the connected component of the identity
of the variety of representations of a finitely generated nilpotent
group $\Gamma$ into a connected reductive complex affine algebraic
group $G$. We determine the mixed Hodge structure on the representation variety $\hom^{0}(\Gamma,G)$
and on the character variety $\hom^{0}(\Gamma,G)\quot G$. We obtain
explicit formulae (both closed and recursive) for the mixed Hodge
polynomial of these representation and character varieties. 
\end{abstract}

\maketitle
\tableofcontents{}

\section{Introduction}

\thispagestyle{empty}

Let $K$ be a connected compact Lie group, and $\Gamma$ be a finitely
generated nilpotent group. The topology of the space of representations
$\hom(\Gamma,K)$ and of its conjugation quotient space $\hom(\Gamma,K)/K$
was considered by Ramras and Stafa in \cite{Sta,RS}, who obtained
explicit formulae for the Poincaré polynomials of their identity components
$\hom^{0}(\Gamma,K)$ and $\hom^{0}(\Gamma,K)/K$.

Let $G$ be the complexification of $K$, and consider now the affine
algebraic varieties $\mathcal{R}_{\Gamma}G:=\hom(\Gamma,G)$ and the
geometric invariant theoretic quotient by conjugation $\mathcal{M}_{\Gamma}G:=\mathcal{R}_{\Gamma}G\quot G$.
In this article we determine the mixed Hodge structures on the identity
components $\mathcal{R}_{\Gamma}^{0}G\subset\mathcal{R}_{\Gamma}G$
and $\mathcal{M}_{\Gamma}^{0}G\subset\mathcal{M}_{\Gamma}G$ and compute
their mixed Hodge polynomials, generalizing the formulas obtained
in \cite{RS} and in \cite{FS}.

We now describe more precisely our main results. A finitely generated
nilpotent group $\Gamma$ is said to have abelian rank $r$ if the
torsion free part of $\Gamma_{Ab}:=\Gamma/[\Gamma,\Gamma]$ has rank
$r$.  A connected reductive complex affine algebraic group $G$ will
be called a \textit{reductive $\mathbb{C}$-group}, and $T$, $W$
will stand, respectively, for a fixed maximal torus and the Weyl group
of $G$.

Recall that the mixed Hodge numbers $h^{k,p,q}(X)$ of a quasi-projective
variety $X$ are the dimensions of the $(p,q)$-summands of the natural
mixed Hodge structure (MHS) on $H^{k}(X,\mathbb{C})$. We say that
$X$ is of \emph{Hodge-Tate type} if $h^{k,p,q}(X)=0$ unless $p=q$. 

\begin{thm}
\label{thm:MHS}Let $\Gamma$ be a finitely generated nilpotent group
with abelian rank $r\geq1$, and $G$ a reductive $\mathbb{C}$-group.
Then, both $\mathcal{M}_{\Gamma}^{0}G$ and $\mathcal{R}_{\Gamma}^{0}G$
are of Hodge-Tate type. More concretely, the MHS on $\mathcal{M}_{\Gamma}^{0}G$
coincides with the one of $T^{r}/W$, where $W$ acts diagonally,
and the MHS of $\mathcal{R}_{\Gamma}^{0}G$ coincides with that of
$(G/T)\times_{W}T^{r}$. 
\end{thm}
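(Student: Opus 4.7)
The plan is to realize $\Rep_{\Gamma}^{0} G$ (and its GIT quotient $\M_{\Gamma}^{0} G$) as a geometric object built from the flag variety $G/T$ and the torus power $T^r$ modulo the Weyl group, from which the Hodge-Tate conclusion and the identification of MHS follow from standard facts about flag varieties, tori, and finite quotients.

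The first step is to set up the conjugation morphism. The key input from nilpotent group theory is that, for a finitely generated nilpotent group $\Gamma$ of abelian rank $r$, every representation in the identity component $\Rep_{\Gamma}^{0} G$ is $G$-conjugate to one whose image lies in $T$ and factors through the torsion-free rank-$r$ quotient of $\Gamma_{Ab}$. Choosing generators $\gamma_1,\dots,\gamma_r$ lifting a $\Z$-basis of this quotient, one obtains a natural surjective morphism
$$\Psi : G \times T^r \longrightarrow \Rep_{\Gamma}^{0} G, \qquad (g,(t_1,\dots,t_r)) \mapsto \bigl(\gamma_i \mapsto g\, t_i\, g^{-1}\bigr),$$
extended trivially on torsion generators. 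This is equivariant under the $N_G(T)$-action by right multiplication on $G$ and diagonal conjugation on $T^r$, and since $N_G(T)/T = W$, it descends to a morphism $\overline{\Psi} : G/T \times_W T^r \to \Rep_{\Gamma}^{0} G$ which is bijective on the $W$-regular locus of $T^r$.

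The main obstacle is promoting $\overline{\Psi}$ to an isomorphism of mixed Hodge structures on cohomology. Off the regular locus the fibers of $\Psi$ may jump, so $\overline{\Psi}$ need not be an isomorphism of varieties and the MHS identification must be argued cohomologically. I would pursue two complementary strategies. The first is to exhibit $\overline{\Psi}$ as a proper birational map and show that the pushforward on cohomology preserves the weight filtration, so that the MHS on $\Rep_{\Gamma}^{0} G$ coincides with the $W$-invariant MHS of $G/T \times T^r$. The second is a reduction to the abelian case $\Gamma = \Z^r$, exploiting that the identity component depends on $\Gamma$ only through the torsion-free rank of $\Gamma_{Ab}$; here one would invoke (or adapt) the corresponding results of \cite{FS}.

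With the model $G/T \times_W T^r$ in hand, the remaining MHS claims are formal. The flag variety $G/T$ is smooth projective with a Bruhat decomposition into affine cells, so its cohomology is pure of Hodge-Tate type. The algebraic torus $T^r \cong (\C^*)^{r\dim T}$ likewise has Hodge-Tate MHS, and the Künneth formula for MHS preserves Hodge-Tate type; hence $G/T \times T^r$ is Hodge-Tate. Passing to $W$-invariants under the finite group action preserves this property and gives the MHS of $G/T \times_W T^r$, proving the assertion for $\Rep_{\Gamma}^{0} G$. For the character variety, the GIT quotient by $G$ on the model collapses the $G/T$-factor to a point and leaves $T^r/W$ (the classical Chevalley picture of a torus quotient by a finite reflection group), which is again Hodge-Tate, thereby yielding the MHS identification for $\M_{\Gamma}^{0} G$.
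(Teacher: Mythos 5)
Your overall target --- identifying the MHS of $\Rep_{\Gamma}^{0}G$ with that of $(G/T)\times_{W}T^{r}$ and of $\M_{\Gamma}^{0}G$ with that of $T^{r}/W$, then reading off the Hodge--Tate property from the flag variety, the torus, and finite quotients --- is the right one, and the final formal step of your argument is fine. But the core of the proposal has a genuine gap. The map $\Psi$ (the paper's $\varphi_{G}$) is \emph{not} surjective onto $\Rep_{\Gamma}^{0}G$: its image is exactly the set of representations in the identity component whose evaluations are semisimple (equivalently, have closed conjugation orbits), a proper dense constructible subset; commuting tuples containing non-semisimple elements lie in the identity component but cannot be conjugated into $T$. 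For the same reason your first strategy cannot be repaired: a dominant morphism that is not surjective cannot be proper, so $\overline{\Psi}$ is neither proper nor a desingularization, and no pushforward/weight-filtration argument is available. Likewise, the assertion that every class in $\M_{\Z^{r}}^{0}G$ is represented by a tuple in $T^{r}$ is true but is itself a nontrivial theorem (Theorem \ref{thm:Tequals0} of the paper, answering a question from \cite{FL2}), not something you may assume at the outset.

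The missing idea is the passage through a maximal compact subgroup $K\subset G$. The paper first reduces from nilpotent $\Gamma$ to $\Gamma_{Ab}$ and then to $\Z^{r}$ using \cite{BS}, uses the deformation retractions of \cite{Be} and \cite{PeSo} to replace $\hom^{0}(\Z^{r},G)$ by $\hom^{0}(\Z^{r},K)$ up to homotopy, and then invokes Baird's theorem that the \emph{compact} model $\varphi_{K}:(K/T_{K})\times_{W}T_{K}^{r}\to\hom^{0}(\Z^{r},K)$ is a desingularization inducing an isomorphism on cohomology. Chasing the resulting commutative ladder shows that the algebraic map $\varphi_{G}$, despite failing to be surjective or proper, induces an isomorphism on cohomology, and being algebraic it then preserves mixed Hodge structures. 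The analogous diagram built from the strong deformation retraction of \cite{FL2} handles $\chi:T^{r}/W\to\M_{\Z^{r}}^{0}G$ on the character-variety side. Your second, vaguer strategy (reduce to $\Z^{r}$ and adapt \cite{FS}) points in this direction, but \cite{FS} only covers the quotient $T^{r}/W$; without the compact picture you have no mechanism for establishing the cohomological isomorphism for the representation variety itself.
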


\begin{rem}
In Theorem \ref{thm:MHS}, $W$ acts on $(G/T)\times T^{r}$ via the standard action on the homogeneous space $G/T$ and by simultaneous conjugation on $T^{r}$. The MHS on $G/T$ is the natural one coming from the full flag variety $G/B$, where $B\subset G$ is a Borel subgroup. We also note that the condition $r\geq 1$ in Theorem \ref{thm:MHS} is not vacuous since finite nilpotent groups have abelian rank $0$.  In fact, a nilpotent group $N$ has abelian rank $r\geq 1$ if and only if $|N|=\infty$.
\end{rem}

Now, consider the mixed Hodge polynomial of the algebraic variety
$X$, defined as:

\[
\mu_{X}(t,u,v)=\sum_{k,p,q\geq0}h^{k,p,q}(X)\,t^{k}u^{p}v^{q}\in\mathbb{Z}[t,u,v].
\]
Knowing the MHS of $\mathcal{M}_{\Gamma}^{0}G$ and $\mathcal{R}_{\Gamma}^{0}G$
allows for the explicit computation of their mixed Hodge polynomials,
as follows. Let $\mathfrak{t}$ denote the Lie algebra of the maximal
torus $T$, and recall that $W$ acts naturally on its dual $\mathfrak{t}^{*}$, as a reflection group. 

\begin{thm}
\label{thm:mu-formula}Let $\Gamma$ be a finitely generated nilpotent
group with abelian rank $r\geq1$, and $G$ a reductive $\mathbb{C}$-group
of rank $m$. Then, we have: 
\begin{equation}
\mu_{\mathcal{R}_{\Gamma}^{0}G}\left(t,u,v\right)=\frac{1}{|W|}\prod_{i=1}^{m}(1-(t^{2}uv)^{d_{i}})\sum_{g\in W}\,\frac{\det\left(I+tuv\,A_{g}\right)^{r}}{\det\left(I-t^{2}uv\,A_{g}\right)}\label{eq:mu-Rgamma}
\end{equation}
and 
\[
\mu_{\mathcal{M}_{\Gamma}^{0}G}\left(t,u,v\right)=\frac{1}{|W|}\sum_{g\in W}\,\det\left(I+tuv\,A_{g}\right)^{r},
\]
where $d_{1},\ldots,d_{m}$ are the characteristic degrees of $W$ $($see definition in Section \ref{sec:mhs-smooth}$),$ and $A_{g}$ is the action of $g\in W$ on $H^{1}(T,\mathbb{C})\cong\mathfrak{t}^{*}$. 
\end{thm}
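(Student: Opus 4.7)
The plan is to use Theorem~\ref{thm:MHS} to reduce the computation to the two explicit models it provides: the MHS on $\mathcal{M}_\Gamma^0 G$ coincides with that of $T^r/W$, while the MHS on $\mathcal{R}_\Gamma^0 G$ coincides with that of $G/T \times_W T^r$. Since $W$ is finite and acts on smooth quasi-projective varieties, the rational cohomology of each quotient is the $W$-invariant part of the cohomology of the cover, and this identification is compatible with mixed Hodge structures. It therefore suffices to compute the $W$-equivariant mixed Hodge polynomials of $T^r$ and of $G/T\times T^r$ and then perform a Molien-type average over $W$.

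For $\mu_{\mathcal{M}_\Gamma^0 G}$, I would use that $H^*(T^r;\mathbb{C})=\wedge^\bullet H^1(T^r;\mathbb{C})$ is Hodge--Tate with $H^k$ pure of type $(k,k)$, and that the diagonal $W$-action identifies $H^1(T^r)\cong(\mathfrak{t}^*)^{\oplus r}$ as a $W$-representation. The graded $W$-character of this exterior algebra is
\[
\sum_{k\ge 0}\mathrm{tr}\bigl(g\,|\,\wedge^k H^1(T^r)\bigr)\,s^k=\det(I+s\,A_g)^r.
\]
Substituting $s=tuv$ (so that a class of bidegree $(k,k)$ in cohomological degree $k$ contributes $t^k u^k v^k$) and averaging over $W$ yields the claimed formula for $\mathcal{M}_\Gamma^0 G$.

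For $\mu_{\mathcal{R}_\Gamma^0 G}$, I would combine the K\"unneth decomposition $H^*(G/T\times T^r)=H^*(G/T)\otimes H^*(T^r)$ with two standard inputs. First, the affine-bundle projection $G/T\to G/B$ identifies $H^*(G/T)$ with $H^*(G/B)$, which is pure of Hodge--Tate type with $H^{2k}$ of bidegree $(k,k)$. Second, by Borel's theorem, $H^*(G/T;\mathbb{C})$ is, as a graded $W$-module, the coinvariant algebra $\mathbb{C}[\mathfrak{t}^*]/(\mathbb{C}[\mathfrak{t}^*]^W_+)$, whose graded $W$-character is given by the Chevalley--Shephard--Todd formula
\[
\sum_{k\ge 0}\mathrm{tr}\bigl(g\,|\,H^{2k}(G/T)\bigr)\,s^k=\frac{\prod_{i=1}^m(1-s^{d_i})}{\det(I-s\,A_g)}.
\]
Setting $s=t^2 uv$, multiplying by the $T^r$ factor $\det(I+tuv\,A_g)^r$ from the previous step, and averaging over $W$ produces exactly~\eqref{eq:mu-Rgamma}.

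The main obstacle is not the calculation itself but the Hodge-theoretic bookkeeping: one must verify that the MHS isomorphisms furnished by Theorem~\ref{thm:MHS}, together with the topological and algebraic descriptions of $H^*(G/T)$ and $H^*(T^r)$, are simultaneously $W$-equivariantly compatible with the Hodge bigrading, so that the $W$-trace-weighted Hodge polynomial of the cover, averaged over $W$, genuinely computes the mixed Hodge polynomial of the quotient. Once this compatibility is in place, the two Molien-type identities above reduce the proof to a short formal manipulation.
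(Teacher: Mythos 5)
Your proposal is correct and follows essentially the same route as the paper: Theorem \ref{thm:MHS} reduces everything to $T^r/W$ and $(G/T)\times_W T^r$, and the polynomials are then computed via finite-quotient invariants, K\"unneth, the identification $H^*(G/T)\cong H^*(G/B)$, Borel's coinvariant description, and a Molien-type average over $W$ (exactly the content of Theorem \ref{thm:SrG}, Corollary \ref{cor:Hilbert}, and Corollaries \ref{cor-main-1} and \ref{cor-main-2}). The only cosmetic difference is that the paper produces the factor $\prod_{i=1}^m(1-(t^2uv)^{d_i})$ by computing the Hilbert series of $[H^*(BT)\otimes H^*(T^r)]^W$ and then quotienting by the regular sequence $(\sigma_1,\ldots,\sigma_m)$, whereas you invoke the equivalent Chevalley--Shephard--Todd graded character formula for the coinvariant algebra directly.
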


We now outline the proofs of these theorems. Using the main results of \cite{BS}, we start by considering $\mathbb{Z}^{r}$, the free
part of $\Gamma_{Ab}$. Let $K$ be a maximal compact
subgroup of $G$. Considering the deformation retractions obtained
in \cite{FL2} for $\mathcal{M}_{\mathbb{Z}^{r}}G$, and in \cite{PeSo}
for $\mathcal{R}_{\mathbb{Z}^{r}}G$, we are then reduced to describing
the cohomology of the compact spaces $\mathcal{R}_{\mathbb{Z}^{r}}^{0}K:=\hom^{0}(\Gamma,K)$
and $\hom^{0}(\Gamma,K)/K$.

A priori, there is no reason for these compact spaces to have MHSs
on their cohomology groups. In \cite{Ba} the rational cohomology
ring of $\mathcal{R}_{\mathbb{Z}^{r}}^{0}K$ is shown to be the Weyl
group invariants of $(K/T_{K})\times T_{K}^{r}$ where $T_{K}=T\cap K\subset K$
is a maximal torus. $(K/T_{K})\times_{W}T_{K}^{r}$ is a desingularization
of $\mathcal{R}_{\mathbb{Z}^{r}}^{0}K$, and is homotopic to the space
$(G/T)\times_{W}T^{r}$. Given the natural MHSs on $G/T$, $T$, and
on the classifying space $BT$, in the context of equivariant cohomology,
we conclude that both $\mathcal{R}_{\mathbb{Z}^{r}}^{0}G$ and $\mathcal{M}_{\mathbb{Z}^{r}}^{0}G$
are of Hodge-Tate type.

The formula for $\mathcal{M}_{\Gamma}^{0}G$ then follows from the
one in \cite{FS}. To get the formula for $\mathcal{R}_{\Gamma}^{0}G$
we observe, as in \cite[Section 5]{RS}, that the graded cohomology
ring of $\mathcal{R}_{\mathbb{Z}^{r}}K$ is a regrading of the cohomology
ring of the torus $T^{r}$. Using representation theory, analogous
to what is done in \cite{KT}, we determine the regrading explicitly
to obtain Formula \eqref{eq:mu-Rgamma}.

The main results are proved in Sections \ref{sec:mhs-rep} and \ref{sec:mhs-char},
after a brief review of relevant facts about mixed Hodge structures
and character varieties in Section \ref{sec-charvar}. In Section
\ref{sec:irr-comp}, we show that although the path-component of the
identity is a union of algebraic components and the mixed Hodge structure
is determined by the torus component (irreducible by \cite{Sik}),
there is in fact only one irreducible component through the identity.
This follows by closely analyzing the main proof in \cite{FL2}. Moreover,
we give a description of the singular locus of these moduli spaces $\mathcal{M}_{\Z^r}^{0}G$ in the
cases of classical $G$ (expanding on work of \cite{Sik}). We also obtain interesting
number-theoretic results in Subsection \ref{subsec:poly-count}.  In particular, we show that 
$\mathcal{M}_{\Z^r}^{0}G$ are ``polynomial count" and compute the number of $\mathbb{F}_q$-points of these varieties where $\mathbb{F}_q$ is a field of order $q$. The last
section (Section \ref{sec:exotic}) applies our results to examples of character and representation varieties with ``exotic components'' considered in \cite{ACG};
here, the group $G$ is of the form $\SL(p,\mathbb{C})^{m}/\mathbb{Z}_{p}$
for a prime $p$.

\subsection*{Acknowledgments}

We thank Dan Ramras for pointing out a useful reference. Lawton is partially supported by a Collaboration Grant
from the Simons Foundation, and thanks IHES for hosting him in 2021 when this work was advanced. Florentino was supported by CMAFcIO (University of Lisbon), under project UIDB/04561/2020, FCT Portugal. Lastly, we thank the referee for helpful comments.

\section{Character Varieties and Mixed Hodge Structures}

\label{sec-charvar}

\subsection{Character Varieties}

Let $G$ be a connected reductive complex affine algebraic group.
As mentioned earlier, we will say $G$ is a \emph{reductive $\C$-group}
in abbreviation. Let $\Gamma$ be a finitely generated group. The
set of homomorphisms $\rho:\Gamma\to G$ has the structure of an affine
algebraic variety over $\mathbb{C}$ (not necessarily irreducible);
the generators of $\Gamma$ are translated into elements of $G$ satisfying
algebraic relations determined by the relations of $\Gamma$. Since
$G$ admits a faithful representation $G\hookrightarrow\mathrm{GL}(n,\C)$
for some $n$, we will sometimes refer to $\rho$ as a $G$-representation
of $\Gamma$, or simply a representation of $\Gamma$ when the context
is clear.

We have two naturally defined varieties: the $G$-representation variety
of $\Gamma$, 
\[
\mathcal{R}_{\Gamma}G:=\hom(\Gamma,G),
\]
and the $G$-character variety of $\Gamma$, 
\[
\mathcal{M}_{\Gamma}G:=\hom\left(\Gamma,G\right)\quot G,
\]
which is the affine geometric invariant theoretic (GIT) quotient under
the conjugation action of $G$ on $\mathcal{R}_{\Gamma}G$.

We endow $\mathcal{R}_{\Gamma}G$ with the Euclidean topology coming
from a choice of $r$ generators of $\Gamma$ and the natural embedding
$\hom(\Gamma,G)\hookrightarrow G^{r}\subset\mathbb{C}^{rn^{2}}$,
for appropriate $n$. Hence, $\mathcal{M}_{\Gamma}G$ is naturally
endowed with a Hausdorff topology, as the GIT quotient identifies
orbits whose closures intersect (see \cite[Theorem 2.1]{FL2} for
a precise statement). However, when speaking of irreducible components
we refer to the Zariski topology.

We note that $\mathcal{M}_{\Gamma}G$ is homotopic to the non-Hausdorff
(conjugation) quotient space $\mathcal{R}_{\Gamma}G/G$ by \cite[Proposition 3.4]{FLR},
and so any homotopy invariant property of either $\M_{\Gamma}(G)$
or $\Rep_{\Gamma}G/G$ applies to the other.

\subsection{Mixed Hodge Structures}

In this subsection we summarize facts about mixed Hodge structures;
details can be found in \cite{De1,De2,PS,Vo}. The singular cohomology
of a complex variety $X$ is endowed with a decreasing \textit{Hodge
filtration} $F_{\bullet}$: 
\[
H^{k}\left(X,\mathbb{C}\right)=F_{0}\supseteq\cdots\supseteq F_{k+1}=0
\]
that generalizes the same named filtration for smooth complex projective
varieties. In general, the graded pieces of this filtration do not
constitute a pure Hodge structure. However, the rational cohomology
of these varieties admits an increasing \textit{Weight filtration}:
\[
0=W^{-1}\subseteq\cdots\subseteq W^{2k}=H^{k}\left(X,\mathbb{Q}\right),
\]
and the Hodge filtration induces a pure Hodge structure on the graded
pieces of its complexification, denoted $W_{\mathbb{C}}^{\bullet}$.
The triple $\left(H^{k}\left(X,\mathbb{C}\right),F_{\bullet},W_{\mathbb{C}}^{\bullet}\right)$
constitutes a \textit{mixed Hodge structure }(MHS) over $\C$, and
we denote the graded pieces of the associated decomposition by: 
\begin{eqnarray*}
H^{k,p,q}\left(X,\mathbb{C}\right) & = & Gr_{F}^{p}Gr_{p+q}^{W_{\mathbb{C}}}H^{k}\left(X,\mathbb{C}\right).
\end{eqnarray*}
Their dimensions, called \emph{mixed}\textit{ Hodge numbers} $h^{k,p,q}(X):=\dim_{\mathbb{C}}H^{k,p,q}(X,\mathbb{C})$,
are encoded in the polynomial: 
\[
\mu_{X}(t,u,v)=\sum_{k,p,q\geq0}h^{k,p,q}(X)\,t^{k}u^{p}v^{q}\in\mathbb{Z}[t,u,v],
\]
called the \textit{mixed Hodge polynomial} of $X$. This polynomial
reduces to the Poincaré polynomial of $X$, by setting $u=v=1$. These
constructions can also be reproduced for compactly supported cohomology,
yielding a similar decomposition into pieces denoted $H_{c}^{k,p,q}(X,\C)$.

When the variety $X$ is smooth and projective the Hodge structure on $H^{*}(X,\C)$
is pure, that is: $h^{k,p,q}\neq0\implies k=p+q$. We are also interested
in two other types of MHS that can be read from its Hodge numbers.
We say that $X$ is \textit{balanced } (see \cite{LMN}) or of\textit{ Hodge-Tate type}
if $h^{k,p,q}\neq0\implies p=q$. For those varieties that further
satisfy $h^{k,p,q}\neq0\implies k=p=q$ we call them \textit{round} (see \cite{FS}).

Recall that MHSs satisfy the Künneth theorem, so that, for the cartesian
product $X\times Y$ of varieties, we have: 
\begin{equation}
\mu_{X\times Y}=\mu_{X}\,\mu_{Y}.\label{eq:product}
\end{equation}

Also important for this paper is the behavior of these structures
under an algebraic action of a finite group. If $F$ is a finite group
acting algebraically on a complex variety $X$, the induced action
on the cohomology respects the mixed Hodge decomposition. Moreover,
one can recover the mixed Hodge structure on the quotient by: 
\begin{eqnarray}
H^{k,p,q}(X/F,\mathbb{C}) & \cong & H^{k,p,q}(X,\mathbb{C})^{F}.\label{eq:Groiso}
\end{eqnarray}
Then, the types of mixed Hodge structures on the quotient $X/F$ have
similar properties to that of $X$. In particular, if $X$ is pure,
balanced or round, respectively, so is $X/F$. The situation is even
easier when $G$ is an algebraic group and $F$ is a finite subgroup
acting by left translation. 
\begin{lem}
\label{lem:MHS}Let $G$ be an algebraic group and $F$ a finite subgroup.
Then the MHS on $G$ and on $G/F$ coincide. 
\end{lem}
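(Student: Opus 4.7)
The plan is to combine the invariance formula \eqref{eq:Groiso} with the classical fact that on a connected algebraic group, left translations act trivially on cohomology. First, since $F$ acts algebraically (by left multiplication) with finite quotient, formula \eqref{eq:Groiso} gives
\[
H^{k,p,q}(G/F,\mathbb{C}) \cong H^{k,p,q}(G,\mathbb{C})^{F},
\]
reducing the lemma to proving that the $F$-action on $H^{*}(G,\mathbb{C})$ is trivial on every $(k,p,q)$-piece.

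For each $f\in F$, the left translation $L_f \colon G \to G$, $g\mapsto fg$, is an algebraic automorphism, and therefore induces a morphism of mixed Hodge structures on cohomology. Hence it is enough to check that $L_f^{*}$ is the identity on total singular cohomology. For this I would use path-connectedness of $G$ (implicit in the paper's standing convention that the relevant algebraic groups are connected): pick a continuous path $\gamma\colon[0,1]\to G$ with $\gamma(0)=e$ and $\gamma(1)=f$, and read it off as a topological homotopy $(t,g)\mapsto\gamma(t)\,g$ between $\text{id}_G$ and $L_f$. Thus $L_f^{*}=\text{id}$ on $H^{*}(G,\mathbb{C})$, and in particular on each $H^{k,p,q}(G,\mathbb{C})$.

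Combining the two observations yields $H^{k,p,q}(G/F,\mathbb{C})\cong H^{k,p,q}(G,\mathbb{C})$ for all $k,p,q$, so the two MHSs coincide piece by piece. I do not expect a genuine obstacle here; the only subtlety is the connectedness hypothesis on $G$. If $G$ were disconnected, one would instead need to assume $F\subset G^{0}$ so that the same path argument still applies componentwise; otherwise the left translations by elements in other components would permute connected components of $G$ and act nontrivially on $H^{0}$. Since every algebraic group appearing in the main body of the paper is connected, the stated form of the lemma is sufficient for all applications that follow.
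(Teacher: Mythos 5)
Your proof is correct and follows essentially the same route as the paper: the paper reduces to the triviality of the $F$-action on $H^{*}(G,\mathbb{C})$ via \eqref{eq:Groiso} and cites \cite[\S 6]{DiLe} for that triviality, remarking that the idea is that the $F$-action extends to an action of a connected group. Your homotopy argument (left translation by $f$ connected to the identity through a path in $G$, combined with $L_f^*$ being a morphism of MHS) is exactly a self-contained version of that cited fact, and your caveat about connectedness of $G$ is a fair observation about the lemma's implicit hypotheses.
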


\begin{proof}
This follows from the fact that the $F$-action on the MHS of $G$
is trivial, as shown in \cite[Section 6]{DiLe} (see also \cite{LMN}). Intuitively, the idea is
that the action of $F$ extends to the action of a connected group. 
\end{proof}
Another important invariant related to the MHS of $X$ is the $E$-polynomial,
obtained by specializing $\mu_{X}$ to $t=-1$: $E_{X}(u,v):=\mu_{X}(-1,u,v)$.
Then the Euler characteristic of $X$ is obtained as $\chi(X)=\mu_{X}(-1,1,1)$.
We will also consider the compactly supported version of $E_{X}$, also
called the \emph{Serre polynomial}: 
\[
E_{X}^{c}(u,v):=\sum_{k,p,q\geq0}\left(-1\right)^{k}h_{c}^{k,p,q}(X)\,u^{p}v^{q}\in\mathbb{Z}[t,u,v],
\]
where $h_{c}^{k,p,q}(X)=\dim H_{c}^{k,p,q}(X,\C)$.

Let $K\left(\mathcal{V}ar_{\mathbb{C}}\right)$ be the \textit{Grothendieck
ring of varieties} over $\mathbb{C}$. Additively, this is a ring
generated by isomorphism classes of algebraic varieties modulo the
excision relation: if $Y\hookrightarrow X$ is a closed subvariety,
then in $K\left(\mathcal{V}ar_{\mathbb{C}}\right)$ we identify: 
\begin{eqnarray*}
\left[X\right] & = & \left[Y\right]+\left[X\backslash Y\right].
\end{eqnarray*}
The product in $K\left(\mathcal{V}ar_{\mathbb{C}}\right)$ is given
by cartesian product: $\left[X\right]\cdot\left[Y\right]:=\left[X\times Y\right]$.
The Serre polynomial
and the Euler characteristic are examples of \textit{motivic measures}, that is, maps from the objects of $\mathcal{V}ar_{\mathbb{C}}$ to a ring
that factors through the Grothendieck ring of varieties.

\section{Irreducible Components}

\label{sec:irr-comp}

For many groups $\Gamma$, $\mathcal{R}_{\Gamma}G$ is \emph{not irreducible}
and/or \emph{not path-connected}, and so the same happens with $\mathcal{M}_{\Gamma}G$.
Recall that path-connected algebraic varieties need not be irreducible,
and that irreducible algebraic varieties (over $\mathbb{C}$) are
necessarily path-connected.

Path-components of $\mathcal{R}_{\Gamma}G$ are sometimes related
to path-components of $\mathcal{R}_{\Gamma}K$ for a maximal compact
subgroup $K\subset G$. For example, for a finitely generated free
group $F_{r}$, $\Rep_{F_{r}}G\cong G^{r}$ and $\Rep_{F_{r}}K\cong K^{r}$
and so there is a $\pi_{0}$-bijection by the (topological) polar
decomposition: $G\cong K\times\R^{n}$, for $n=\dim_{\R}K$. Much
more non-trivially, there is a strong deformation retraction from
$\Rep_{\Gamma}G$ to $\Rep_{\Gamma}K$ for $\Gamma$ finitely generated
and nilpotent by \cite{Be}; see \cite{PeSo} for the abelian case.
And thus, there is a bijection between path-components in these cases
as well. 
\begin{example}
\label{exa:disconnected}Let $\Gamma=\mathbb{Z}^{2}$ and $K=\mathrm{SO}(3)$.
Suppose $\rho\in\hom(\Gamma,K)$ is given by the pair of commuting
matrices $\mbox{diag}(1,-1,-1)$ and $\mbox{diag}(-1,-1,1)$. Then,
these matrices cannot be simultaneously conjugated, within $K$, to
the same maximal torus of $\SO(3)$. This implies that $\hom(\Gamma,K)$
is not path-connected, since the collection of pairs that can be simultaneously
conjugated into a given maximal torus forms a disjoint path-component.
Thus, by the discussion above, $\hom(\mathbb{Z}^{2},\PGL(2,\mathbb{C}))$
is also not connected, as $\PGL(2,\mathbb{C})\cong\mathrm{SO}(3,\mathbb{C})$
is the complexification of $\mathrm{SO}(3)$. 
\end{example}

Let us denote by 
\[
\mathcal{R}_{\Gamma}^{0}G:=\hom^{0}(\Gamma,G),
\]
and by 
\[
\mathcal{M}_{\Gamma}^{0}G:=\hom^{0}\left(\Gamma,G\right)\quot G,
\]
the path-connected components of the identity representation in $\mathcal{R}_{\Gamma}G$
and in $\mathcal{M}_{\Gamma}G$, respectively. In some cases, $\mathcal{R}_{\Gamma}^{0}G$
and $\mathcal{M}_{\Gamma}^{0}G$ are irreducible varieties; but they
are always a finite union of irreducible varieties.

\subsection{The Torus Component}

An interesting case is that of a finitely presentable group $\Gamma$
whose abelianization is free, that is 
\[
\Gamma_{Ab}:=\Gamma/[\Gamma,\Gamma]\cong\Z^{r},
\]
for some $r\in\mathbb{N}$. Examples in this class of groups include
``exponent canceling groups'' (see \cite{LaRa}) which are those
that admit presentations such that in all relations the exponents
of each generator add up to zero; such as right angled Artin groups
(abbreviated RAAGs), and fundamental groups of closed orientable surfaces.

For these groups, since $\Gamma\to\Gamma_{Ab}\cong\Z^{r}$ is surjective,
we can consider the following sequence: 
\[
T^{r}\cong\hom(\mathbb{Z}^{r},T)\hookrightarrow\hom(\Gamma_{Ab},G)\hookrightarrow\hom(\Gamma,G)\twoheadrightarrow\mathcal{M}_{\Gamma}G.
\]
Let us denote by $\mathcal{M}_{\Gamma}^{T}G\subset\mathcal{M}_{\Gamma}G$
the image of the composition above and call it the \emph{torus component}. It follows that $\mathcal{M}_{\Gamma}^{T}G$ is an irreducible subvariety of
$\mathcal{M}_{\Gamma}G$, being the image of the irreducible variety $T^r$ under a morphism.  In the case when $\Gamma=\Z^r$, $\mathcal{M}_{\Gamma}^{T}G$ is an irreducible component of 
$\mathcal{M}_{\Gamma}G$ by \cite[Theorem 2.1]{Sik}.

Obviously, the identity representation ($\rho(\gamma)=e$ for all
$\gamma\in\Gamma$, $e\in G$ being the identity) belongs to $\mathcal{M}_{\Gamma}^{T}G$
since it comes from the identity representation in $\hom(\Gamma,T)$.
Since $\mathcal{M}_{\Gamma}^{T}G$ is path-connected (being irreducible
over $\mathbb{C}$), we conclude that $\mathcal{M}_{\Gamma}^{T}G\subset\mathcal{M}_{\Gamma}^{0}G$.
We observe that there are pairs $(\Gamma,G)$ where the varieties
$\mathcal{M}_{\Gamma}^{T}G$ and $\mathcal{M}_{\Gamma}^{0}G$ agree,
and others where they do not. 
\begin{example}
\label{exa:zero-and-T}When $G$ is abelian (we always assume connected),
it is clear that $\mathcal{M}_{\Gamma}^{T}G=\mathcal{M}_{\Gamma}^{0}G$.
For an example where they disagree, let $\Gamma=\Gamma_{\angle}$
be the ``angle RAAG'' associated with a path graph with 3 vertices,
considered in \cite{FL4}. Then, even for a low dimensional group
such as $G=\SL(2,\mathbb{C})$ we have that $\mathcal{M}_{\Gamma}^{0}G$
has 3 irreducible components, one being $\mathcal{M}_{\Gamma}^{T}G$
and 2 extra ones. Moreover, for the case $G=\SL(3,\mathbb{C})$ there
are components in $\mathcal{M}_{\Gamma}^{0}G$ which have higher dimension
than the dimension of $\mathcal{M}_{\Gamma}^{T}G$. 
\end{example}

\begin{rem}
\label{rem:components}One can also ask if the identity representation
is contained in a single irreducible component of $\mathcal{M}_{\Gamma}^{0}G$.
This also fails for $\mathcal{M}_{\Gamma_{\angle}}^{0}(\SL(2,\mathbb{C}))$,
as shown in \cite{FL4}. 
\end{rem}

\subsection{The Free Abelian Case}

As seen in Examples \ref{exa:disconnected}, \ref{exa:zero-and-T}
and Remark \ref{rem:components}, the comparison between the varieties
$\mathcal{M}_{\Gamma}G$, $\mathcal{M}_{\Gamma}^{0}G$ and $\mathcal{M}_{\Gamma}^{T}G$
for general $\Gamma$ and $G$ is far from being trivial.

We now show that $\mathcal{M}_{\Gamma}^{0}G=\mathcal{M}_{\Gamma}^{T}G$
when $\Gamma$ is free abelian, for all $G$. In this situation, we
are dealing with representations defined by elements of $G$ that
pairwise commute. The following theorem generalizes Remark 2.4 in
\cite{Sik}, and completely answers a question raised in \cite[Problem 5.7]{FL2}. 
\begin{thm}
\label{thm:Tequals0}For every $r\in\mathbb{N}$ and reductive $\mathbb{C}$-group
$G$, $\mathcal{M}_{\mathbb{Z}^{r}}^{T}G=\mathcal{M}_{\mathbb{Z}^{r}}^{0}G$. 
\end{thm}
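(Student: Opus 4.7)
The containment $\mathcal{M}^{T}_{\mathbb{Z}^{r}}G \subseteq \mathcal{M}^{0}_{\mathbb{Z}^{r}}G$ is already established in the paragraphs preceding the statement, so the work is to show the reverse inclusion. My plan is to revisit the construction of the strong deformation retraction of $\mathcal{M}_{\mathbb{Z}^{r}}G$ onto its compact counterpart given in \cite{FL2}, and combine it with the well-known description of the identity component of $\hom(\mathbb{Z}^{r},K)$ for a connected compact $K$.

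Fix $[\rho]\in \mathcal{M}^{0}_{\mathbb{Z}^{r}}G$. First, by the GIT closed-orbit correspondence, I would replace $\rho$ by a polystable representative, that is, a commuting $r$-tuple $(g_{1},\ldots,g_{r})\in G^{r}$ of semisimple elements. Second, apply the continuous deformation $\Phi_{t}:\mathcal{M}_{\mathbb{Z}^{r}}G\to \mathcal{M}_{\mathbb{Z}^{r}}G$ of \cite{FL2} (with $\Phi_{0}=\mathrm{id}$ and $\Phi_{1}$ taking values in $\mathcal{M}_{\mathbb{Z}^{r}}K$); because $\Phi_{t}$ is a strong deformation retraction and fixes the identity representation, it maps the path-component $\mathcal{M}^{0}_{\mathbb{Z}^{r}}G$ into the path-component of the identity in $\mathcal{M}_{\mathbb{Z}^{r}}K$. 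Third, invoke the classical theorem (due to Borel in the simply-connected case and to Baird, Kac--Smilga, and Friedman--Morgan in general) stating that $\hom^{0}(\mathbb{Z}^{r},K)=K\cdot T_{K}^{r}$, so that the identity path-component of $\mathcal{M}_{\mathbb{Z}^{r}}K$ is exactly $T_{K}^{r}/W$. In particular $\Phi_{1}([\rho])\in T_{K}^{r}/W \subseteq T^{r}/W = \mathcal{M}^{T}_{\mathbb{Z}^{r}}G$.

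The remaining delicate step, which I expect to be the main obstacle and which justifies the authors' phrase \emph{closely analyzing the main proof in \cite{FL2}}, is to upgrade this endpoint statement to the claim that the \emph{entire} path $t\mapsto \Phi_{t}([\rho])$ stays inside $\mathcal{M}^{T}_{\mathbb{Z}^{r}}G$, so that the initial point $[\rho]=\Phi_{0}([\rho])$ itself lies in $\mathcal{M}^{T}_{\mathbb{Z}^{r}}G$. Concretely, the FL2 retraction is defined on a polystable tuple via a simultaneous polar / Kempf--Ness decomposition of the $g_{i}$, and the key point to verify is that if the compact (unitary) parts $(k_{1},\ldots,k_{r})$ all lie in a common maximal torus $T_{K}$, then every $g_{i}$ (and indeed every intermediate tuple along the retraction) lies in the complexified torus $T$. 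Once this compatibility between the polar decomposition and the torus structure is made precise by unwinding the construction in \cite{FL2}, the inclusion $\mathcal{M}^{0}_{\mathbb{Z}^{r}}G \subseteq \mathcal{M}^{T}_{\mathbb{Z}^{r}}G$ follows, completing the proof.
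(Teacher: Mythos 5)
Your proposal follows essentially the same route as the paper's proof: pass to a semisimple (polystable) commuting tuple, apply the strong deformation retraction of \cite{FL2} onto the compact character variety, use Baird's identification of the identity component there with $T_{K}^{r}/W$, and then exploit the compatibility of the retraction (the eigenvalue/polar deformation) with the torus to pull the conclusion back to the original tuple. The ``delicate step'' you single out is exactly the one the paper resolves, by conjugating the whole retraction path by the single element that places its endpoint in $T_{K}^{r}$ and observing that the reversed path then lies in $T^{r}$.
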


\begin{proof}
Let $K$ be a fixed maximal compact subgroup of $G$, and let $T_{K}=T\cap K$.
Then $T_{K}\subset K$ is a maximal torus in $K$.
Tom Baird \cite{Ba} considered the \emph{compact character variety}
\[
\mathcal{N}_{\mathbb{Z}^{r}}K:=\hom(\mathbb{Z}^{r},K)/K,
\]
and showed that: 
\begin{enumerate}
\item The (path) connected component of the identity $\mathcal{N}_{\mathbb{Z}^{r}}^{0}K\subset\mathcal{N}_{\mathbb{Z}^{r}}K$
coincides with the space of conjugation classes of representations
\[
\hom^{T_{K}}(\mathbb{Z}^{r},K)/K,
\]
where $\hom^{T_{K}}(\mathbb{Z}^{r},K)$ denotes the representations
$\rho$ whose $r$ evaluations $\rho(\gamma_{i})$ can be simultaneously
conjugated into the maximal torus $T_{K}$. 
\item $\mathcal{N}_{\mathbb{Z}^{r}}^{0}K$ is homeomorphic to the quotient
$T_{K}^{r}/W$, where $W=N_{K}(T_{K})/T_{K}$ is the Weyl group associated
to $T_{K}$. 
\end{enumerate}
By \cite[Theorem 1.1]{FL2}, there is a strong deformation retraction
from $\mathcal{M}_{\mathbb{Z}^{r}}G$ to $\mathcal{N}_{\mathbb{Z}^{r}}K$
which (by continuity) restricts to one from $\mathcal{M}_{\mathbb{Z}^{r}}^{0}G$
to $\mathcal{N}_{\mathbb{Z}^{r}}^{0}K$.

Let $[\rho]\in\mathcal{M}_{\mathbb{Z}^{r}}^{0}G$. Then there exists
a commuting tuple $(g_{1},\ldots,g_{r})$ in $G^{r}$ such that $[\rho]=[(g_{1},...,g_{r})]$.
By \cite[Proposition 3.1]{FL2} we can assume that each element $g_{i}\in G$
is semisimple. The strong deformation retraction (SDR), which is $G$-conjugate
equivariant, provides a path $\rho_{t}$ from this tuple to a commuting
tuple in $G_{K}^{r}$ where $G_{K}=\{gkg^{-1}\ |\ g\in G,\ k\in K\}$.
With respect to an embedding $G\hookrightarrow\SL(n,\C)$, which preserves
semisimplicity, this SDR is given by the eigenvalue retraction defined
by deforming $\ell e^{i\theta}$ to $e^{i\theta}$ by sending $\ell$
to 1. By \cite[Lemma 3.4]{FL2}, there exists a single element $g_{0}$
in $G$ that will conjugate the resulting $r$-tuple in $G_{K}^{r}$
to be in $K^{r}$. And by Baird's result, and the fact that we remain
in the identity component by continuity, we know there is a single
element $k_{0}$ in $K$ that we can conjugate the resulting tuple
in $K^{r}$ so it is in $T_{K}^{r}$. Let $h_{0}:=k_{0}g_{0}$, and
consider the conjugated reverse path $\psi_{t}:=h_{0}\rho_{1-t}h_{0}^{-1}$.
This path begins in $T_{K}^{r}$ (by definition) and ends in $T^{r}$ since eigenvalue retraction deforms $T$ to $T_K$ and hence the reverse path takes elements in $T_K$ and maps them to $T$.
Since $\psi_{1}=h_{0}\rho_{0}h_{0}^{-1}=(h_{0}g_{1}h_{0}^{-1},\ldots, h_{0}g_{r}h_{0}^{-1})$,
we conclude that 
\[
[\rho]=[(g_{1},...,g_{r})]=[(h_{0}g_{1}h_{0}^{-1},\ldots, h_{0}g_{r}h_{0}^{-1})]=[\psi_{1}]
\]
is in $T^{r}/W$, as required. 
\end{proof}

\section{Mixed Hodge Structure on \texorpdfstring{$\hom^{0}(\Gamma,G)$}{Hom(F,G)} }

\label{sec:mhs-rep}

Here we prove the statements in Theorems \ref{thm:MHS} and \ref{thm:mu-formula}
that concern the connected component $\mathcal{R}_{\Gamma}^{0}G$
of the trivial representation in the representation variety $\mathcal{R}_{\Gamma}G=\hom(\Gamma,G)$.

Let us first describe the situation for $\Gamma\cong\mathbb{Z}^{r}$.
Consider, as in the proof of Theorem \ref{thm:Tequals0}, the compact
character variety 
\[
\mathcal{N}_{\mathbb{Z}^{r}}K=\hom(\mathbb{Z}^{r},K)/K,
\]
where $K$ is a fixed maximal compact subgroup of $G$. Recall our
convention that $T_{K}=T\cap K$ where $T$ is a maximal torus in
$G$. Baird \cite{Ba} showed that the isomorphism $\mathcal{N}_{\mathbb{Z}^{r}}^{0}K\cong T_{K}^{r}/W$,
is part of a natural $K$-equivariant commutative diagram:

\[
\xymatrix{(K/T_{K})\times_{W}T_{K}^{r}\ar[d]_{\pi_{K}}\ar[r]^{\varphi_{K}} & \hom^{0}(\mathbb{Z}^{r},K)\ar[d]^{\pi_{K}}\\
T_{K}^{r}/W\ar[r]^{\cong} & \mathcal{N}_{\mathbb{Z}^{r}}^{0}K,
}
\]
where $\varphi_{K}$ is a desingularization of $\hom^{0}(\mathbb{Z}^{r},K)$
which induces an isomorphism in cohomology, and the vertical maps
are the quotient maps by $K$-conjugation.

Passing to the complexification, there is an analogous $G$-equivariant
commutative diagram: 
\[
\xymatrix{(G/T)\times_{W}T^{r}\ar[d]_{\pi_{G}}\ar[r]^{\varphi_{G}} & \hom^{0}(\mathbb{Z}^{r},G)\ar[d]^{\pi_{G}}\\
T^{r}/W\ar[r]^{\chi} & \mathcal{M}_{\mathbb{Z}^{r}}^{0}G.
}
\]
There are some notable differences from the compact case: 
\begin{enumerate}
\item $\chi$ is bijective, birational, and a normalization map (Corollary
\ref{cor-normalization}), 
\item $\chi$ is not generally known to be an isomorphism, although it is
when $G$ is of classical type (Corollary \ref{cor-classical}), 
\item $\varphi_{G}$ is not even surjective, hence not a desingularization
morphism (we will say more about this below), 
\item $\pi_{G}$ is the GIT quotient map (with respect to the $G$-conjugation
action). 
\end{enumerate}
Despite these differences, we will show that the mixed Hodge structures
of $T^{r}/W$ and $\M_{\Z^{r}}^{0}G$ coincide, as do those of $(G/T)\times_{W}T^{r}$
and $\hom^{0}(\Z^{r},G)$.

\subsection{Mixed Hodge Structures on a Smooth Model of $\mathcal{R}_{\mathbb{Z}^{r}}^{0}G$}\label{sec:mhs-smooth}

The above discussion suggests to consider the smooth irreducible algebraic
variety: 
\[
S_{r}G:=(G/T)\times_{W}T^{r},
\]
whose MHS we now determine. The natural MHS on $G/T$ is the one of
the full flag variety $G/B$, where $B$ is a Borel subgroup, which
has well-known cohomology. Indeed, it is a classical fact that there
is an identification $K/T_{K}\cong G/B$. On the other hand, $K/T_{K}\hookrightarrow G/T$
is a strong deformation retraction (see for example \cite[Theorem 10]{BFLL}),
which provides isomorphisms of cohomology spaces: 
\[
H^{*}(G/B)\cong H^{*}(K/T_{K})\cong H^{*}(G/T).
\]
Since $T$ is contained in a certain Borel subgroup, there is a surjective
algebraic map $\varphi:G/T\to G/B$ which upgrades the above isomorphism
to an isomorphism of MHSs, $H^{*}(G/B)\cong H^{*}(G/T)$, since the restriction of $\varphi$ to $K/T_{K}$ is the map that induces the isomorphism $K/T_{K}\cong G/B$. 

Recall that the Weyl group $W$ acts on $\mathfrak{t}^*$, the dual of the Lie algebra of $T$. By the Shephard-Todd theorem \cite{ShTo}, the ring of $W$-invariants $\C[\mathfrak{t}^*]^W$ is a polynomial ring generated by homogeneous generators of degrees $d_{1},\ldots,d_{m}$ called the {\it characteristic degrees} of $W$ ($m=\dim \mathfrak{t}$). These
are well-known for all Weyl groups of simple $G$ (isomorphic to the Weyl groups of simple $K$), and can be consulted in \cite[Table 1]{RS} or in \cite[Page 7]{KT}.

\begin{thm}
\label{thm:SrG} Let $m$ be the rank of $G$ and $d_{1},\ldots,d_{m}$
be the characteristic degrees of $W$. The variety $S_{r}G$ is of Hodge-Tate
type and its mixed Hodge polynomial is given by: 
\[
\mu_{S_{r}G}\left(t,u,v\right)=\frac{1}{|W|}\prod_{i=1}^{m}(1-(t^{2}uv)^{d_{i}})\sum_{g\in W}\,\frac{\det\left(I+tuv\,A_{g}\right)^{r}}{\det\left(I-t^{2}uv\,A_{g}\right)}.
\]
\end{thm}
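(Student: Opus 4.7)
The plan is to apply the finite-quotient formula \eqref{eq:Groiso} together with Künneth to the space $X := G/T \times T^r$ (whose finite $W$-quotient is $S_r G$), and to determine the graded $W$-character on each Künneth factor using Borel's theorem plus elementary representation theory.

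First, I would observe that $G/T$ and $T$ are both of Hodge--Tate type: the former has the pure Hodge--Tate MHS of the flag variety $G/B$ (as recalled above, via $G/T \to G/B$ being an affine bundle), and $T \cong (\C^*)^m$ has $H^1(T)$ pure of type $(1,1)$. Hence by the Künneth formula \eqref{eq:product} both $T^r$ and $X$ are Hodge--Tate, and by \eqref{eq:Groiso} so is the quotient $S_r G = X/W$. This proves the Hodge--Tate claim.

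Second, \eqref{eq:Groiso} applied at the level of mixed Hodge polynomials, combined with Künneth, yields the averaging identity
\[
\mu_{S_r G}(t,u,v) \;=\; \frac{1}{|W|} \sum_{g \in W} \tr_g(\mu_{G/T}) \cdot \tr_g(\mu_{T^r}),
\]
where $\tr_g(\mu_Y) := \sum_{k,p,q} \tr\bigl(g \mid H^{k,p,q}(Y,\C)\bigr)\, t^k u^p v^q$ denotes the graded $W$-trace. For the torus factor, identifying $H^1(T^r;\C) \cong (\mathfrak{t}^*)^{\oplus r}$ on which $W$ acts diagonally by $A_g$, and using that $H^*(T^r)$ is the exterior algebra on $H^1(T^r)$ (with $H^1$ of Hodge type $(1,1)$), one reads off
\[
\tr_g(\mu_{T^r}) \;=\; \det(I + tuv\, A_g)^r.
\]
For the flag factor, I would invoke Borel's theorem, which supplies an isomorphism $H^*(BT;\C) \cong H^*(G/T;\C) \otimes_\C H^*(BG;\C)$ of graded $W$-modules with $W$ acting trivially on $H^*(BG) = H^*(BT)^W$. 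Since $H^*(BT) = \sym(\mathfrak{t}^*)$ with $H^2(BT)$ pure of type $(1,1)$, its graded $W$-character is $1/\det(I - t^2 uv\, A_g)$, while by Chevalley--Shephard--Todd that of $H^*(BG)$ is $\prod_i 1/(1 - (t^2 uv)^{d_i})$. Dividing these yields
\[
\tr_g(\mu_{G/T}) \;=\; \frac{\prod_{i=1}^m \bigl(1 - (t^2 uv)^{d_i}\bigr)}{\det(I - t^2 uv\, A_g)}.
\]
Substituting both characters into the averaging formula and pulling the $g$-independent factor out of the sum produces exactly the claimed expression for $\mu_{S_r G}$.

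The main obstacle will be to justify that Borel's decomposition is genuinely an isomorphism of \emph{graded $W$-modules compatible with the mixed Hodge structures} on $BT$ and $BG$; these MHSs are defined via finite-dimensional algebraic approximations (e.g.\ products of projective spaces) of the classifying spaces, and one must check that the $W$-action and the Leray--Hirsch splitting behave well under this limit. Everything else is routine character-theoretic bookkeeping built on top of \eqref{eq:product} and \eqref{eq:Groiso}.
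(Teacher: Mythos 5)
Your proposal is correct and follows essentially the same route as the paper: K\"unneth plus the finite-quotient averaging formula \eqref{eq:Groiso}/\eqref{eq:PH-series}, the graded character $\det(I+tuv\,A_g)^r$ for the exterior algebra $H^*(T^r)$, and Borel's theorem with the characteristic degrees $d_i$ to handle the flag factor. The only (harmless) repackaging is that you extract the per-element character $\chi_g(H^*(G/T))=\prod_i(1-(t^2uv)^{d_i})/\det(I-t^2uv\,A_g)$ from the $W$-equivariant factorization $H^*(BT)\cong H^*(G/T)\otimes H^*(BT)^W$, whereas the paper keeps $H^*(G/T)$ as the quotient of $H^*(BT)$ by the regular sequence $(\sigma_1,\ldots,\sigma_m)$ and pulls out $\prod_i(1-(t^2uv)^{d_i})$ at the level of Hilbert series; both rest on the same Chevalley--Shephard--Todd input, and the MHS-compatibility caveat you flag is exactly the point the paper also treats as standard.
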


\begin{proof}
Since mixed Hodge structures respect the Künneth formula, from $H^{*}(G/B)\cong H^{*}(G/T)$,
we get an isomorphism: 
\[
H^{*}(S_{r}G)\cong[H^{*}(G/T)\otimes H^{*}(T^{r})]^{W}\cong[H^{*}(G/B)\otimes H^{*}(T^{r})]^{W},
\]
of mixed Hodge structures, where the superscript means that we are
considering the $W$-invariant subspace. Since the full flag variety
$G/B$ is smooth and projective, its cohomology has a pure Hodge structure.
Moreover, there is an isomorphism 
\[
H^{*}(G/B)\cong H^{*}(BT)_{W}
\]
where $BT\cong(BS^{1})^{m}$ is the classifying space of $T$, and
$H^{*}(BT)_{W}$ is the algebra of co-invariants under the $W$-action
on $H^{*}(BT)$. Also, $H^{*}(BT)$ is a polynomial ring $\mathbb{C}[x_{1},\ldots,x_{m}]$
where each $x_{i}$ has triple grading $(2,1,1)$, since $BT$ can
be identified with $(\mathbb{C}P^{\infty})^{m}$ (in particular, it
has pure cohomology). By a classical theorem of Borel (see \cite{Re}
for a modern treatment), there is an isomorphism: 
\[
H^{*}(BT)_{W}\cong\mathbb{C}[x_{1},\ldots,x_{m}]/(\sigma_{1},\ldots,\sigma_{m}),
\]
where the $\sigma_{i}$ are the homogeneous generators of the ring
of $W$-invariants $H^{*}(BT)^{W}$, with degrees $(2d_{i},d_{i},d_{i})$.

From the above, and the fact that $\sigma_{1},\ldots,\sigma_{m}$
are $W$-invariants, we obtain: 
\[
H^{*}(S_{r}G)\cong[H^{*}(G/B)\otimes H^{*}(T^{r})]^{W}\cong[H^{*}(BT)\otimes H^{*}(T^{r})]^{W}/(\sigma_{1},\ldots,\sigma_{m}).
\]
Now, the mixed Hodge polynomial $\mu_{X}(t,u,v)$ of a variety $X$
is the Hilbert series of its cohomology $H^{*}(X)$ with the triple
grading given by its mixed Hodge structure. Denote by $\mathfrak{H}(A)$
the Hilbert series of a graded algebra $A$, in the variable $x$.
It is a standard result that, if $a\in A$ is not a zero divisor,
then 
\[
\mathfrak{H}(A/(a))=\mathfrak{H}(A)\,(1-x^{d}),
\]
where $d$ is the degree of $a$. Applied to our case, and since $\sigma_{1},\ldots,\sigma_{m}$
form a regular sequence (see \cite{KT}), we get the equality of Hilbert series in the
three variables $t,u,v$:

\[
\mathfrak{H}(H^{*}(S_{r}G))=\mathfrak{H}([H^{*}(BT)\otimes H^{*}(T^{r})]^{W})\,\prod_{i=1}^{m}(1-(t^{2}uv)^{d_{i}}).
\]
The result thus follows from: 
\begin{equation}\label{eq:hilbert}
\mathfrak{H}([H^{*}(BT)\otimes H^{*}(T^{r})]^{W})=\frac{1}{|W|}\sum_{g\in W}\,\frac{\det\left(I+tuv\,A_{g}\right)^{r}}{\det\left(I-t^{2}uv\,A_{g}\right)}.
\end{equation}
Formula \eqref{eq:hilbert} is obtained by applying Corollary \ref{cor:Hilbert}
below with $V_{0}=H^{2,1,1}(BT)$ and $V_{1}=\cdots=V_{r}=H^{1,1,1}(T)$,
since $H^{*}(BT)=S^{\bullet}V_{0}$ and $T^{r}$ has round cohomology
generated in degrees $(1,1,1)$: $H^{*}(T^{r})=\wedge^{\bullet}H^{1,1,1}(T^{r})\cong(\wedge^{\bullet}H^{1,1,1}(T))^{\otimes r}$. 
\end{proof}

Recall some definitions and facts from the theory of representations
of finite groups. If $V=\oplus_{k\geq0}V^{k}$ is a graded $\mathbb{C}$-vector
space (possibly infinite dimensional, but with finite dimensional
summands), and $g:V\to V$ is a linear map that preserves the grading,
define the \emph{graded-character} of $g$ by: 
\[
\chi_{g}(V):=\sum_{k\geq0}\text{tr}(g|_{V^{k}})\,x^{k}\in\mathbb{C}[[x]].
\]
It is additive and multiplicative, under direct sums and tensor products,
respectively: 
\[
\chi_{g}(V_{1}\oplus V_{2})=\chi_{g}(V_{1})+\chi_{g}(V_{2}),\quad\quad\chi_{g}(V_{1}\otimes V_{2})=\chi_{g}(V_{1})\chi_{g}(V_{2}).
\]

A linear map $g:V\to V$ induces linear maps on the direct sums of
all symmetric powers $S^{\bullet}V:=\oplus_{j\geq0}S^{j}V$, and of
all exterior powers $\wedge^{\bullet}V:=\oplus_{j\geq0}^{\dim V}\wedge^{j}V$.
Note that $S^{\bullet}V$ is graded, with elements of $S^{j}V$ and
$\wedge^{j}V$ having degree $j\delta$, when $V$ is pure of degree
$\delta$. We need to consider two important cases, whose proofs are
standard (see, for instance, \cite[page 69]{JPSer}). 
\begin{lem}
\label{lem:sym-and-wedge} Let $V$ be a vector space, whose elements
are all of degree $\delta$, and $g:V\to V$ a linear map. Then, we
have: 
\[
\chi_{g}(S^{\bullet}V)=\frac{1}{\det(I-x^{\delta}g)},\quad\quad\chi_{g}(\wedge^{\bullet}V)=\det(I+x^{\delta}g).
\]
\end{lem}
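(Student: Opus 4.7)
The plan is to reduce both identities to an eigenvalue computation. Both sides of each identity depend polynomially on the matrix entries of $g$ (and as formal power series in $x^\delta$), so by density of diagonalizable matrices in $\End(V)$ over $\C$ it is enough to verify the formulas when $g$ is diagonal with eigenvalues $\lambda_1, \ldots, \lambda_n$ counted with multiplicity. Alternatively, over $\C$ one can always choose a basis in which $g$ is upper triangular, and then both the graded character $\chi_g$ and the characteristic-type determinants on the right-hand side depend only on the diagonal entries, so the same eigenvalue calculation applies.

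For the exterior algebra identity, the induced action on $\wedge^j V$ is diagonal with eigenvalues $\lambda_{i_1}\cdots\lambda_{i_j}$ for $1 \le i_1 < \cdots < i_j \le n$, hence $\tr(g|_{\wedge^j V}) = e_j(\lambda_1,\ldots,\lambda_n)$, the $j$-th elementary symmetric polynomial. Since $V$ is pure of degree $\delta$, the piece $\wedge^j V$ lives in degree $j\delta$, and summing gives
\[
\chi_g(\wedge^\bullet V) \;=\; \sum_{j=0}^{n} e_j(\lambda_1,\ldots,\lambda_n)\, x^{j\delta} \;=\; \prod_{i=1}^{n}\bigl(1 + x^\delta \lambda_i\bigr) \;=\; \det(I + x^\delta g).
\]

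For the symmetric algebra identity, the induced action on $S^j V$ is diagonal with eigenvalues $\lambda_{i_1}\cdots\lambda_{i_j}$ for $1 \le i_1 \le \cdots \le i_j \le n$, hence $\tr(g|_{S^j V}) = h_j(\lambda_1,\ldots,\lambda_n)$, the $j$-th complete homogeneous symmetric polynomial. The standard generating-function identity $\sum_{j \ge 0} h_j(\lambda)\, y^{j} = \prod_i (1-y\lambda_i)^{-1}$ with $y = x^\delta$ then yields
\[
\chi_g(S^\bullet V) \;=\; \sum_{j \ge 0} h_j(\lambda_1,\ldots,\lambda_n)\, x^{j\delta} \;=\; \prod_{i=1}^{n}\frac{1}{1 - x^\delta \lambda_i} \;=\; \frac{1}{\det(I - x^\delta g)}.
\]

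There is no substantive obstacle; this is the classical eigenvalue computation for the trace on the exterior and symmetric algebras, and it can also be found in \cite[page 69]{JPSer}. The only point requiring care is the degree bookkeeping, and that is automatic: since $V$ is concentrated in degree $\delta$, the $j$-th summand $\wedge^j V$ or $S^j V$ sits in degree $j\delta$, which is exactly the exponent of $x$ produced by inserting $x^\delta g$ into the determinants.
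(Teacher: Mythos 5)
Your proof is correct and is exactly the standard eigenvalue/symmetric-function computation that the paper itself does not spell out, deferring instead to the reference \cite[page 69]{JPSer}. Nothing further is needed; the reduction to the triangular case and the identification of the traces with $e_j$ and $h_j$ are precisely the classical argument being invoked.
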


Now, suppose that a finite group $F$ acts on $V$ preserving the
grading. Recall that the Hilbert-Poincaré series of the graded vector
space $V^{F}$, of $F$-invariants in $V$, can be computed as: 
\begin{equation}
\mathfrak{H}(V^{F})=\frac{1}{|F|}\sum_{g\in F}\chi_{g}(V).\label{eq:PH-series}
\end{equation}
Since all the above constructions are valid for triply graded vector
spaces, and characters are multiplicative under tensor products, the
following is an immediate consequence of Lemma \ref{lem:sym-and-wedge}. 
\begin{cor}
\label{cor:Hilbert}If $V_{i}$, $i=0,\ldots,r$ are finite dimensional
representations of a finite group $F$, then the triply graded Hilbert-Poincaré
series in $t,u,v$ is: 
\[
\mathfrak{H}([S^{\bullet}V_{0}\otimes\wedge^{\bullet}V_{1}\otimes\cdots\otimes\wedge^{\bullet}V_{r}]^{F})=\frac{1}{|F|}\sum_{g\in F}\frac{\prod_{i=1}^{r}\det(I+t^{a_{i}}u^{b_{i}}v^{c_{i}}g)}{\det(I-t^{a_{0}}u^{b_{0}}v^{c_{0}}g)},
\]
where each $V_{i}$ has pure triple degree $(a_{i},b_{i},c_{i})$,
$i=0,\ldots,r$. 
\end{cor}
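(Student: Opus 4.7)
The plan is to reduce the statement to the single-graded case by combining three ingredients that are already in hand: the Molien-type formula \eqref{eq:PH-series} for the Hilbert series of invariants, the multiplicativity of graded characters under tensor products, and Lemma \ref{lem:sym-and-wedge} applied in each tensor factor. Let $W := S^{\bullet}V_0 \otimes \wedge^{\bullet}V_1 \otimes \cdots \otimes \wedge^{\bullet}V_r$, regarded as a triply graded $F$-representation with the induced $F$-action. Since every $V_i$ is pure of triple degree $(a_i,b_i,c_i)$, the spaces $S^j V_0$ and $\wedge^j V_i$ are pure of triple degree $j(a_i,b_i,c_i)$, so $W$ is naturally $\mathbb{Z}_{\geq 0}^3$-graded with finite dimensional homogeneous pieces, and the $F$-action preserves this grading. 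By \eqref{eq:PH-series}, suitably read in three variables, one has
\[
\mathfrak{H}(W^F) \;=\; \frac{1}{|F|}\sum_{g\in F}\chi_g(W),
\]
where $\chi_g(W)$ is the triply graded character of $g$ acting on $W$.

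Next, I would invoke multiplicativity of graded characters under tensor products, which extends verbatim to the triple grading, to obtain
\[
\chi_g(W) \;=\; \chi_g(S^{\bullet}V_0) \cdot \prod_{i=1}^{r}\chi_g(\wedge^{\bullet}V_i).
\]
Each factor is then evaluated using the triply graded analogue of Lemma \ref{lem:sym-and-wedge}. Concretely, for $V_i$ pure of triple degree $(a_i,b_i,c_i)$ the monomial $x^{\delta}$ in Lemma \ref{lem:sym-and-wedge} is replaced by $t^{a_i}u^{b_i}v^{c_i}$, yielding
\[
\chi_g(S^{\bullet}V_0) = \frac{1}{\det(I - t^{a_0}u^{b_0}v^{c_0}\,g)}, \qquad \chi_g(\wedge^{\bullet}V_i) = \det(I + t^{a_i}u^{b_i}v^{c_i}\,g)
\]
for $i\geq 1$. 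Substituting these into the Molien-type formula for $\mathfrak{H}(W^F)$ gives exactly the stated expression.

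The only point that requires genuine care is the passage from the single-variable versions in Lemma \ref{lem:sym-and-wedge} to the triply graded versions, and I would handle it as follows. The standard proofs of Lemma \ref{lem:sym-and-wedge} (for instance via simultaneous diagonalization of $g$ on $V$ and the identities $\chi_g(S^j V) = h_j(\lambda_1,\ldots,\lambda_n)$ and $\chi_g(\wedge^j V) = e_j(\lambda_1,\ldots,\lambda_n)$ in the eigenvalues $\lambda_k$ of $g$) are entirely insensitive to what one calls the grading variable; they only use that $S^j V$, respectively $\wedge^j V$, is concentrated in the single multi-degree $j(a_i,b_i,c_i)$. Hence replacing $x^{\delta}$ by the Laurent monomial $t^{a_i}u^{b_i}v^{c_i}$ is formally valid and the determinantal identities carry over. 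With this observation, the corollary is a one-line consequence of Lemma \ref{lem:sym-and-wedge}, \eqref{eq:PH-series}, and multiplicativity of characters, and no further argument is needed.
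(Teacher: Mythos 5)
Your argument is correct and is essentially the paper's own: the corollary is stated there as an immediate consequence of Lemma \ref{lem:sym-and-wedge}, Formula \eqref{eq:PH-series}, and the multiplicativity of graded characters under tensor products, exactly the three ingredients you combine. Your extra care about replacing $x^{\delta}$ by $t^{a_i}u^{b_i}v^{c_i}$ only makes explicit what the paper leaves implicit.
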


\subsection{Mixed Hodge Structure on $\Rep_{\Gamma}^{0}G$ for Nilpotent $\Gamma$.}

The lower central series of a group $\Gamma$ is defined inductively
by $\Gamma_{1}:=\Gamma$, and $\Gamma_{i+1}:=[\Gamma,\Gamma_{i}]$
for $i>1$. A group $\Gamma$ is \textit{nilpotent} if the lower central
series terminates to the trivial group.

Let $\Gamma$ be a finitely generated nilpotent group. It is a general
theorem that all finitely generated nilpotent groups are finitely
presentable (and residually finite); see \cite{Hi}.

Recall that the abelianization of $\Gamma$: 
\[
\Gamma_{Ab}:=\Gamma/[\Gamma,\Gamma],
\]
can be written as $Ab(\Gamma)\simeq\mathbb{Z}^{r}\oplus F$ where
$r\in\mathbb{N}_{\geq0}$ is the abelian rank of $\Gamma$, and $F$
is a finite abelian group. We now generalize some of the previous
results to nilpotent groups. 
\begin{thm}
\label{thm-mhs-hom} Let $\Gamma$ be a finitely generated nilpotent
group with abelian rank $r\geq1$. Then, the algebraic variety $\mathcal{R}_{\Gamma}^{0}G=\hom^{0}(\Gamma,G)$
has dimension $\dim G+(r-1)\dim T$ and its MHS coincides with the
MHS on $(G/T)\times_{W}T^{r}$. 
\end{thm}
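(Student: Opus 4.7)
The strategy is to reduce to the free abelian case $\Gamma = \mathbb{Z}^{r}$ and then compare $\mathcal{R}_{\mathbb{Z}^{r}}^{0}G$ with the smooth model $S_{r}G = (G/T) \times_{W} T^{r}$, whose MHS was computed in Theorem \ref{thm:SrG}. The reduction invokes \cite{BS}: for finitely generated nilpotent $\Gamma$ of abelian rank $r$, restriction along the canonical surjections $\Gamma \twoheadrightarrow \Gamma_{Ab} \twoheadrightarrow \mathbb{Z}^{r}$ identifies the identity components $\mathcal{R}_{\Gamma}^{0}G$ and $\mathcal{R}_{\mathbb{Z}^{r}}^{0}G$ with enough compatibility to transport both the dimension and the MHS.

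For $\Gamma = \mathbb{Z}^{r}$, the main tool is the algebraic conjugation map
\[
\varphi_{G}\colon S_{r}G \longrightarrow \mathcal{R}_{\mathbb{Z}^{r}}^{0}G,\qquad (gT,(t_{1},\ldots,t_{r})) \longmapsto (gt_{1}g^{-1},\ldots,gt_{r}g^{-1}).
\]
I would establish that $\varphi_{G}^{*}$ is an isomorphism on cohomology via the commutative square
\[
\xymatrix{
(K/T_{K}) \times_{W} T_{K}^{r} \ar[d]_{\simeq} \ar[r]^{\varphi_{K}} & \hom^{0}(\mathbb{Z}^{r},K) \ar[d]^{\simeq} \\
(G/T) \times_{W} T^{r} \ar[r]^{\varphi_{G}} & \mathcal{R}_{\mathbb{Z}^{r}}^{0}G
}
\]
whose right vertical arrow is the strong deformation retraction of \cite{PeSo} restricted to the identity component, and whose left vertical arrow is the inclusion coming from the $W$-equivariant retractions $G/T \simeq K/T_{K}$ and $T \simeq T_{K}$. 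Baird's theorem \cite{Ba} states that $\varphi_{K}$ is a desingularization inducing an isomorphism on cohomology; since the vertical arrows are homotopy equivalences, commutativity forces $\varphi_{G}^{*}$ to be an isomorphism as well.

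Promoting to MHSs is then automatic: $\varphi_{G}$ is algebraic, so $\varphi_{G}^{*}$ is a morphism of MHSs, and by strictness any isomorphism of underlying vector spaces is already an isomorphism of MHSs. Thus the MHS on $\mathcal{R}_{\mathbb{Z}^{r}}^{0}G$ coincides with that of $S_{r}G$. For the dimension, $\dim S_{r}G = \dim(G/T) + r\dim T = \dim G + (r-1)\dim T$; the map $\varphi_{G}$ is birational onto its image (the preimage of a tuple of pairwise regular elements is a single point, since the centralizer in $G$ is $T$ and the $W$-ambiguity is absorbed by $\times_{W}$), and the closure of this image fills out $\mathcal{R}_{\mathbb{Z}^{r}}^{0}G$ because every semisimple commuting $r$-tuple can be simultaneously conjugated into $T$ and such tuples are dense.

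The main obstacle is extracting from \cite{BS} not merely a bijection of path-components but a geometric comparison strong enough to transport the Hodge-theoretic data. A secondary subtlety is that $\varphi_{G}$ fails to be surjective---some commuting tuples are not simultaneously conjugate into $T$---but the square above detours through the compact model, where $\varphi_{K}$ \emph{is} surjective by Baird's theorem and compactness, which is precisely what makes the cohomology isomorphism reach all of $\mathcal{R}_{\mathbb{Z}^{r}}^{0}G$.
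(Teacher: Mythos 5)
Your proposal is correct and follows essentially the same route as the paper: the paper assembles your two steps (reduction to $\Gamma_{Ab}$ via \cite{BS} and \cite{Be}, then comparison of $(G/T)\times_{W}T^{r}$ with $\hom^{0}(\mathbb{Z}^{r},G)$ through the compact models and Baird's desingularization $\varphi_{K}$) into a single commutative ladder, Diagram \eqref{key-ladder}, and likewise promotes the resulting cohomology isomorphisms to MHS isomorphisms using algebraicity of the top row. The only cosmetic difference is that you treat the nilpotent-to-abelian reduction and the free abelian case as two separate squares rather than one three-column diagram.
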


\begin{proof}
By \cite{BS}, we have $\hom^{0}(\Gamma_{Ab},K)\cong\hom^{0}(\Gamma,K)$
and consequently, from \cite{Be}, $\hom^{0}(\Gamma_{Ab},G)$ is homotopic
to $\hom^{0}(\Gamma,G)$. From \cite{Ba}, we know that 
\[
\varphi_{K}:(K/T_{K})\times_{W}T_{K}^{r}\to\hom^{0}(\Gamma_{Ab},K)
\]
is a birational surjection (in fact, a desingularization) that induces
an isomorphism in cohomology. This map is defined by $[(gT,t_{1},...,t_{r})]_{W}\mapsto(gt_{1}g^{-1},...,gt_{r}g^{-1})$,
and we can likewise define $\varphi_{G}$ in the complex situation.

These maps come together to form the following commutative diagram:
\begin{equation}
\xymatrix{(G/T)\times_{W}T^{r}\ar[r]^{\varphi_{G}} & \hom^{0}(\Gamma_{Ab},G)\ar[r] & \hom^{0}(\Gamma,G)\\
(K/T_{K})\times_{W}T_{K}^{r}\ar@{^{(}->}[u]\ar[r]^{\varphi_{K}} & \hom^{0}(\Gamma_{Ab},K)\ar@{^{(}->}[u]\ar[r]^{\cong} & \hom^{0}(\Gamma,K).\ar@{^{(}->}[u]
}
\label{key-ladder}
\end{equation}

Since the bottom row induces isomorphisms in cohomology, by commutativity,
all maps induce isomorphisms in cohomology. Since the upper row is
formed by algebraic maps, these induce isomorphisms of mixed Hodge
structures of the respective cohomologies. The dimension formula is
clear since $\dim G/T=\dim G-\dim T$. 
\end{proof}
\begin{rem}
Let $\Gamma_{Ab}\cong\Z^{r}$ with free abelian generators $\gamma_{1},...,\gamma_{r}$.
We note some properties of the map $\varphi_{G}:(G/T)\times_{W}T^{r}\to\hom^{0}(\Z^{r},G)$.
Let $G_{ss}$ be the set of semisimple elements of $G$ (elements
in $G$ with closed conjugation orbits), and $\hom^{0}(\Z^{r},G_{ss}):=\{\rho\in\hom^{0}(\Z^{r},G)\ |\ \rho(\gamma_{i})\in G_{ss},\ 1\leq i\leq r\}$.
It is shown in \cite{FL2} that $\hom^{0}(\Z^{r},G_{ss})$ is exactly
the set of representations with closed conjugation orbits. In the
identity component, these are exactly the representations whose image
can be conjugated to a fixed maximal torus. Hence, the image of $\varphi_{G}$
is exactly the set $\hom^{0}(\Z^{r},G_{ss})$. So we see that $\varphi_{G}$
is not surjective and $\hom^{0}(\Z^{r},G_{ss})$ is a constructible
set (not obvious a priori). Now from \cite{PeSo} we know that $\hom^{0}(\Z^{r},G)$
is homotopic to $\hom^{0}(\Z^{r},G_{ss})$, and since $G_{ss}$ is
dense in $G$ we deduce that $\hom^{0}(\Z^{r},G_{ss})$ is dense in
$\hom^{0}(\Z^{r},G)$. Thus, $\varphi_{G}$ is dominant, homotopically
surjective, and induces a cohomological isomorphism (from Diagram
\eqref{key-ladder}). Since $W$ acts freely, $(G/T)\times_{W}T^{r}$
is smooth although $\hom^{0}(\Z^{r},G)$ is generally singular. By
Remark \ref{rem:smooth} below, the Zariski dense representations
in $\hom^{0}(\Z^{r},G)$ are smooth points. It is easy to see that
$\varphi_{G}^{-1}(\rho)$ is a point if $\rho$ is Zariski dense (a
generic condition). Hence, $\varphi_{G}$ is birational, although,
unlike its compact analogue $\varphi_{K}$, it is not a desingularization. 
\end{rem}

\begin{cor}
\label{cor-main-1} Let $G$ be a reductive $\C$-group of rank $m$, whose Weyl group has characteristic degrees $d_{1},\ldots,d_{m}$. Let $\Gamma$
be a finitely generated nilpotent group of abelian rank $r\geq1$.
The variety $\mathcal{R}_{\Gamma}^{0}G$ is of Hodge-Tate type and
its mixed Hodge polynomial is given by: 
\begin{equation}
\mu_{\mathcal{R}_{\Gamma}^{0}G}\left(t,u,v\right)=\frac{1}{|W|}\prod_{i=1}^{m}(1-(t^{2}uv)^{d_{i}})\sum_{g\in W}\,\frac{\det\left(I+tuv\,A_{g}\right)^{r}}{\det\left(I-t^{2}uv\,A_{g}\right)}.\label{eq:mu-hom-formula}
\end{equation}
\end{cor}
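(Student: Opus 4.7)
The plan is to deduce this corollary immediately by combining the two preceding results, Theorem \ref{thm-mhs-hom} and Theorem \ref{thm:SrG}. First I would invoke Theorem \ref{thm-mhs-hom}, which identifies the MHS on $\mathcal{R}_{\Gamma}^{0}G$ with the MHS on the smooth model $S_{r}G = (G/T)\times_{W}T^{r}$. Since the mixed Hodge polynomial is, by definition, the generating function of the mixed Hodge numbers, this identification gives the equality
\[
\mu_{\mathcal{R}_{\Gamma}^{0}G}(t,u,v) = \mu_{S_{r}G}(t,u,v).
\]

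Next I would apply Theorem \ref{thm:SrG}, which asserts that $S_{r}G$ is of Hodge-Tate type and provides the explicit closed form for $\mu_{S_{r}G}(t,u,v)$ in terms of the characteristic exponents $d_{i}$ and the action $A_{g}$ of $g \in W$ on $\mathfrak{t}^{*}$. Transporting both properties across the MHS isomorphism of Theorem \ref{thm-mhs-hom} yields simultaneously that $\mathcal{R}_{\Gamma}^{0}G$ is of Hodge-Tate type and that its mixed Hodge polynomial equals the right-hand side of \eqref{eq:mu-hom-formula}.

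There is no substantive obstacle here, as the corollary is a formal consequence of the two quoted theorems. The only point requiring a brief check is that the word \emph{coincides} in Theorem \ref{thm-mhs-hom} is in fact strong enough to transfer the Hodge-Tate property, i.e.\ that one has a genuine isomorphism of triply graded cohomology, and not merely an equality of Betti numbers. This is ensured by the construction in the proof of Theorem \ref{thm-mhs-hom}: the map $\varphi_{G}$ is algebraic, so it induces a morphism of MHSs on cohomology, and a morphism of MHSs which is an isomorphism on underlying vector spaces is automatically an isomorphism of MHSs. Consequently, the vanishing $h^{k,p,q}(S_{r}G) = 0$ for $p \neq q$ passes to $\mathcal{R}_{\Gamma}^{0}G$, and the closed formula transfers verbatim.
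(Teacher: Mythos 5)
Your proposal is correct and matches the paper's own proof, which likewise deduces the corollary immediately by combining Theorem \ref{thm-mhs-hom} with Theorem \ref{thm:SrG}. Your extra check that ``coincides'' means a genuine isomorphism of triply graded MHSs (via the algebraicity of the maps in Diagram \eqref{key-ladder}) is a sensible precaution and consistent with what the paper establishes.
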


\begin{proof}
This follows immediately from Theorem \ref{thm:SrG} and Theorem \ref{thm-mhs-hom}. 
\end{proof}
Corollaries \ref{cor-main-1} and \ref{cor-main-2} together establish
Theorem \ref{thm:mu-formula} from the Introduction. 
\begin{cor}
\label{cor:Euler-char} For every finitely generated nilpotent group
$\Gamma$, and reductive $\mathbb{C}$-group $G$, the Poincaré polynomial
and $E$-polynomial of $\mathcal{R}_{\Gamma}^{0}G$ are given, respectively,
by: 
\begin{eqnarray*}
P_{t}\left(\mathcal{R}_{\Gamma}^{0}G\right) & = & \frac{1}{|W|}\prod_{i=1}^{m}(1-t^{2d_{i}})\sum_{g\in W}\,\frac{\det\left(I+t\,A_{g}\right)^{r}}{\det\left(I-t^{2}\,A_{g}\right)}\\
E_{\mathcal{R}_{\Gamma}^{0}G}(u,v) & = & \frac{1}{|W|}\prod_{i=1}^{m}(1-(uv)^{d_{i}})\sum_{g\in W}\,\det\left(I-uv\,A_{g}\right)^{r-1}
\end{eqnarray*}
and the Euler characteristic of $\mathcal{R}_{\Gamma}^{0}G$ vanishes. 
If the abelian rank of $\Gamma$ is $2$ and $G$ is simply-connected, then the $E$-polynomial simplifies
to $E_{\mathcal{R}_{\Gamma}^{0}G}(u,v)=\prod_{i=1}^{m}(1-(uv)^{d_{i}})$.
\end{cor}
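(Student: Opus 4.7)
The plan is to derive all three formulas as direct specializations of the mixed Hodge polynomial computed in Corollary \ref{cor-main-1}, so no new geometric input is needed.

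First, I would use the standard identity $P_t(X) = \mu_X(t,1,1)$, which follows from the definition of the mixed Hodge polynomial since $\dim H^k(X,\C) = \sum_{p,q} h^{k,p,q}(X)$. Substituting $u = v = 1$ in \eqref{eq:mu-hom-formula} reproduces the claimed Poincar\'e polynomial with no further manipulation.

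For the $E$-polynomial, I would use $E_X(u,v) = \mu_X(-1,u,v)$ and observe that at $t = -1$ both $tuv$ and $t^2uv$ evaluate to $-uv$ and $uv$, respectively, so that
\[
\det(I + tuv\,A_g)\big|_{t=-1} \;=\; \det(I - uv\,A_g) \;=\; \det(I - t^2 uv\,A_g)\big|_{t=-1}.
\]
Hence a single common factor of $\det(I - uv\,A_g)$ cancels between numerator and denominator in every term of the $W$-sum, dropping the exponent from $r$ to $r-1$. This cancellation also makes the evaluation at $t = -1$ manifestly polynomial (the apparent pole of the rational expression is removable), and it delivers exactly the stated $E$-polynomial.

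For the Euler characteristic I would use $\chi(X) = E_X(1,1)$ and plug $u = v = 1$ into the $E$-polynomial formula obtained above. Since $G$ is a nontrivial connected reductive $\C$-group, its rank satisfies $m \geq 1$ and each characteristic exponent $d_i \geq 1$, so the prefactor $\prod_{i=1}^{m}(1 - 1^{d_i})$ vanishes. The $W$-sum $\sum_{g \in W}\det(I - A_g)^{r-1}$ is finite, so the whole product is $0$. The only (mild) subtlety in the proof is the cancellation in the $E$-polynomial step that removes the apparent singularity at $t=-1$; everything else is a direct substitution into the formula of Corollary \ref{cor-main-1}.
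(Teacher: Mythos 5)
Your proposal is correct and follows essentially the same route as the paper: specialize the mixed Hodge polynomial of Corollary \ref{cor-main-1} at $u=v=1$ for the Poincar\'e polynomial, at $t=-1$ for the $E$-polynomial, and then at $uv=1$ for the Euler characteristic. You merely spell out two details the paper leaves implicit (the cancellation of one factor of $\det(I-uv\,A_{g})$ at $t=-1$, and that the vanishing comes from the prefactor $\prod_{i}(1-(uv)^{d_{i}})$ multiplying a finite sum, which is where $r\geq1$ is used), and both are accurate.
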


\begin{proof}
This follows by evaluating Formula \eqref{eq:mu-hom-formula}
at $u=v=1$ for the Poincaré polynomial, and at $t=-1$ for the $E$-polynomial.
Then, the Euler characteristic is obtained as $\chi(\mathcal{R}_{\Gamma}^{0}G)=E_{\mathcal{R}_{\Gamma}^{0}G}(1,1)=0$, as $r\geq1$. 

Finally, when $r=2$ the term $(1/|W|)\sum_{g\in W}\det(I-uvA_{g})$
is equal to $\mu_{\mathcal{M}_{\mathbb{Z}}^{0}G}(-1,u,v)$ by
Theorem  \ref{thm-abelian-quot} below. On the other hand, since $G$ is simply-connected,
$\mathcal{M}_{\mathbb{Z}}^{0}G\cong T/W\cong\mathbb{C}^{\dim T}$, by a result of Steinberg
in \cite{Ste}. Being affine space, its $E$-polynomial equals 1 (see \cite[Example 2.6]{FS}), as wanted.
\end{proof}

\subsection{Some computations for classical groups}

For certain classes of groups, such as $G=\SL(n,\mathbb{C})$ and
$G=\GL(n,\mathbb{C})$, the above formulas can be made more explicit.
These cases have Weyl group $S_{n}$, the symmetric group on $n$
letters. In the $\GL(n,\mathbb{C})$ case, the action of a permutation
$\sigma\in S_{n}$ on the dual of the Cartan subalgebra of $\mathfrak{gl}_{n}$
can be identified with the action on $\mathbb{C}^{n}$ by permuting
the canonical basis vectors. Therefore, $\det(I-\lambda A_{\sigma})=\prod_{j=1}^{n}(1-\lambda^{j})^{\sigma_{j}}$,
where $\sigma\in S_{n}$ is a permutation with exactly $\sigma_{j}\geq0$
cycles of size $j\in\{1,\ldots,n\}$ (see, for example, \cite[Thm. 5.13]{FS}).
The collection $(\sigma_{1},\sigma_{2}\ldots,\sigma_{n})$ defines
a partition of $n$, one with exactly $\sigma_{j}$ parts of length
$j$, and the number of permutations $\sigma\in S_{n}$ with this
cycle pattern is (see \cite[1.3.2]{St}): 
\[
m_{\sigma}=n!\,{\textstyle (\prod_{j=1}^{n}\sigma_{j}!\,j^{\sigma_{j}}})^{-1}.
\]
Since the characteristic degrees of the Weyl group of $\GL(n,\mathbb{C})$ are exactly
$1,2,\ldots,n$, this leads to the following explicit formula: 
\[
\mu_{\mathcal{R}_{\Gamma}^{0}\GL(n,\mathbb{C})}\left(t,u,v\right)=\prod_{i=1}^{m}(1-(t^{2}uv)^{i})\sum_{\pi\vdash n}\,\prod_{j=1}^{n}\frac{(1-(-tuv)^{j})^{\pi_{j}r}}{{\textstyle \pi_{j}!\,j^{\pi_{j}}}(1-(t^{2}uv)^{j})^{\pi_{j}}},
\]
where $\pi\vdash n$ denotes a partition of $n$ with $\pi_{j}$ parts
of size $j$.

Moreover, in the $\GL(n,\mathbb{C})$ case, we can also derive a recursion
relation, which completely avoids the determination of partitions
or permutations. Since $\mu_{\mathcal{R}_{\Gamma}^{0}\GL(n,\mathbb{C})}$
depends only on $tuv$ and $t^{2}uv$, we use the substitutions $x=tuv$,
and $w=tx=t^{2}uv$. 
\begin{prop}
\label{prop:recursion}Let $G=\GL(n,\mathbb{C})$ and write $\mu_{n}^{r}(x,w):=\mu_{\mathcal{R}_{\Gamma}^{0}G}(t,u,v)$
for a nilpotent group $\Gamma$, of abelian rank $r\geq1$. Then,
we have the recursion relation: 
\begin{equation}
\mu_{n}^{r}(x,w)=\frac{1}{n}\sum_{k=1}^{n}f((-x)^{k},w^{k})\,c_{k}(w)\,\mu_{n-k}^{r}(x,w),\label{eq:recursion}
\end{equation}
with $f(x,w):=\frac{(1-x)^{r}}{1-w}$ and $c_{k}(w):=\prod_{i=0}^{k-1}(1-w^{n-i})$. 
\end{prop}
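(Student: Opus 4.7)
The plan is to reduce the statement to the classical recursion for the cycle index polynomial of the symmetric group. Starting from the closed formula displayed just above the proposition, the substitution $x=tuv$, $w=t^{2}uv$ yields
\[
\mu_{n}^{r}(x,w)=P_{n}(w)\,Q_{n}^{r}(x,w),\qquad P_{n}(w):=\prod_{i=1}^{n}(1-w^{i}),
\]
where
\[
Q_{n}^{r}(x,w):=\sum_{\pi\vdash n}\prod_{j=1}^{n}\frac{1}{\pi_{j}!\,j^{\pi_{j}}}\left(\frac{(1-(-x)^{j})^{r}}{1-w^{j}}\right)^{\pi_{j}}.
\]
The inner factor in $Q_{n}^{r}$ has the exact shape of a cycle index polynomial evaluated at the ``weights'' $z_{j}:=(1-(-x)^{j})^{r}/(1-w^{j})$; this is the structural observation that drives the whole argument.

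Next, I would invoke (or directly re-derive) the standard recursion for the cycle index of $S_{n}$. Using the exponential generating function identity $\sum_{n\geq 0}Z(S_{n})\,z^{n}=\exp\bigl(\sum_{k\geq 1}p_{k}z^{k}/k\bigr)$ and differentiating in $z$, one obtains, after matching coefficients,
\[
n\,Z(S_{n})=\sum_{k=1}^{n}p_{k}\,Z(S_{n-k}).
\]
Specializing $p_{k}\mapsto z_{k}$ and unpacking $Z(S_{n})$ in the partition form above gives immediately
\[
Q_{n}^{r}(x,w)=\frac{1}{n}\sum_{k=1}^{n}\frac{(1-(-x)^{k})^{r}}{1-w^{k}}\,Q_{n-k}^{r}(x,w).
\]

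To finish, I would multiply both sides by $P_{n}(w)$ and use $Q_{n-k}^{r}=\mu_{n-k}^{r}/P_{n-k}(w)$, so that the leftover telescoping ratio is
\[
\frac{P_{n}(w)}{P_{n-k}(w)}=\prod_{i=n-k+1}^{n}(1-w^{i})=\prod_{i=0}^{k-1}(1-w^{n-i})=c_{k}(w).
\]
Since $(1-(-x)^{k})^{r}/(1-w^{k})=f((-x)^{k},w^{k})$ by the definition of $f$, the recursion \eqref{eq:recursion} follows with the stated constants, and the base case $\mu_{0}^{r}=1$ (empty product) is consistent.

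The only subtle point, and the one I would double-check carefully, is the cycle-index recursion: one must verify that the specialization $p_{k}\mapsto z_{k}$ correctly matches the factorial and cycle-length normalizations $\pi_{j}!\,j^{\pi_{j}}$ appearing in the partition sum. Beyond that, the proof is purely formal manipulation of generating functions and no geometric input beyond the explicit formula derived from Corollary \ref{cor-main-1} is needed.
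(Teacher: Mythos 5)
Your proof is correct and is essentially the same as the paper's: the paper defines $\phi_{n}(z,w)=\frac{1}{n!}\sum_{g\in S_{n}}\det(I-zA_{g})^{r}/\det(I-wA_{g})$, invokes the plethystic exponential identity $1+\sum_{n\geq1}\phi_{n}y^{n}=\exp\bigl(\sum_{k\geq1}f(z^{k},w^{k})y^{k}/k\bigr)$ from \cite{Fl}, differentiates in $y$, and then divides by $\prod_{i=1}^{n}(1-w^{i})$ to produce $c_{k}(w)$ --- which is exactly your cycle-index generating-function argument in different packaging, since $\phi_{n}$ is the cycle index $Z(S_{n})$ specialized at $p_{j}\mapsto f(z^{j},w^{j})$. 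The normalization $\pi_{j}!\,j^{\pi_{j}}$ you flag does match, as $n!/\prod_{j}(\pi_{j}!\,j^{\pi_{j}})$ is the number of permutations of cycle type $\pi$, so no gap remains.
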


\begin{proof}
For fixed $r\in\mathbb{N}$, let $\phi_{n}(z,w)$ be the rational
function in variables $z,w$, defined by: 
\[
\phi_{n}(z,w):=\frac{1}{n!}\sum_{g\in S_{n}}\,\frac{\det\left(I-z\,A_{g}\right)^{r}}{\det\left(I-w\,A_{g}\right)},
\]
with $\phi_{0}(z,w)\equiv1$. By \cite[Thm 3.1]{Fl}, the generating
series for $\phi_{n}(z,w)$ is a so-called plethystic exponential:
\[
1+\sum_{n\geq1}\phi_{n}(z,w)\,y^{n}=\mathsf{PE}(f(z,w)\,y):=\exp\left(\sum_{k\geq1}f(z^{k},w^{k})\,\frac{y^{k}}{k}\right)
\]
with $f(z,w)=\frac{(1-z)^{r}}{1-w}$. Differentiating the above identity
with respect to $y$ we get: 
\[
\sum_{n\geq1}n\phi_{n}(z,w)\,y^{n-1}=\left(1+\sum_{m\geq1}\phi_{m}(z,w)\,y^{m}\right)\left(\sum_{k\geq1}f(z^{k},w^{k})\,y^{k-1}\right),
\]
which, by picking the coefficient of $y^{n}$, leads to the recurrence:
\begin{equation}
\phi_{n}(z,w)=\frac{1}{n}\sum_{k=1}^{n}f(z^{k},w^{k})\,\phi_{n-k}(z,w).\label{eq:phi}
\end{equation}
To apply this to $\mu_{n}^{r}(x,w)$ we use Equation \eqref{eq:mu-hom-formula}
in the form: 
\[
\phi_{n}(-x,w)=\frac{\mu_{n}^{r}(x,w)}{\prod_{i=1}^{n}(1-w^{i})},
\]
so the wanted recurrence follows by replacing $z=-x$ in Equation
\eqref{eq:phi}. 
\end{proof}
In the $\SL(n,\mathbb{C})$ case, also with Weyl group $S_{n}$, the
action is the same permutation action, but restricted to the vector
subspace of $\mathbb{C}^{n}$ whose coordinates add up to zero. Hence,
the formula for $\det(I-\lambda A_{\pi})$ acting on dual of $\mathfrak{sl}_{n}$
is now: 
\begin{equation}
\det(I-\lambda A_{\pi}):=\frac{1}{1-\lambda}\prod_{j=1}^{n}(1-\lambda^{j})^{\pi_{j}},\label{eq:det}
\end{equation}
for a permutation $\sigma\in S_{n}$ with $\sigma_{j}$ cycles of
size $j$. Recalling that $\SL(n,\mathbb{C})$ is a group of rank
$n-1$ whose Weyl group has characteristic degrees $2,3,\ldots,n$, we derive the
following formula, reflecting the fact that the $\mathcal{R}_{\Gamma}^{0}\GL(n,\mathbb{C})$
and $\mathcal{R}_{\Gamma}^{0}\SL(n,\mathbb{C})$ cases only differ
by a torus. 
\begin{cor}
Let $G=\SL(n,\mathbb{C})$ and $\Gamma$ be a finitely generated nilpotent
group of abelian rank $r\geq1$. Then: 
\begin{equation}
\mu_{\mathcal{R}_{\Gamma}^{0}\SL(n,\mathbb{C})}\left(x,w\right)=\frac{1}{(1+x)^{r}}\,\mu_{\mathcal{R}_{\Gamma}^{0}\GL(n,\mathbb{C})}\left(x,w\right).\label{eq:mu-hom-GLn}
\end{equation}
\end{cor}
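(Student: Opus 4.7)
The plan is to derive the identity by direct substitution into Formula \eqref{eq:mu-hom-formula}, keeping careful track of the two differences between the $\GL$ and $\SL$ cases: the Weyl group representation on the Cartan subalgebra, and the list of characteristic exponents. Both Weyl groups are $S_n$, so the sums in \eqref{eq:mu-hom-formula} are indexed by the same set; only the summands and the prefactors differ.

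First, I would use Equation \eqref{eq:det}, which states that for a permutation $\sigma\in S_n$ with $\sigma_j$ cycles of length $j$, the $\SL(n,\C)$-Weyl matrix satisfies
\[
\det(I-\lambda A_\sigma^{\SL})=\frac{1}{1-\lambda}\prod_{j=1}^n(1-\lambda^j)^{\sigma_j},
\]
while $\det(I-\lambda A_\sigma^{\GL})=\prod_{j=1}^n(1-\lambda^j)^{\sigma_j}$ in the $\GL(n,\C)$ case. Specializing $\lambda=-x$ in the numerator of \eqref{eq:mu-hom-formula} (so that $\det(I+x A_\sigma)=\det(I-(-x)A_\sigma)$) and $\lambda=w$ in the denominator gives, for each $\sigma\in S_n$,
\[
\frac{\det(I+x A_\sigma^{\SL})^r}{\det(I-w A_\sigma^{\SL})}=\frac{1-w}{(1+x)^r}\cdot\frac{\det(I+x A_\sigma^{\GL})^r}{\det(I-w A_\sigma^{\GL})}.
\]
The factor $\frac{1-w}{(1+x)^r}$ is independent of $\sigma$, so it can be pulled out of the sum over $S_n$.

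Next, I would account for the prefactors in \eqref{eq:mu-hom-formula}. The characteristic exponents of $\GL(n,\C)$ are $1,2,\ldots,n$, while those of $\SL(n,\C)$ are $2,3,\ldots,n$; hence
\[
\prod_{i=1}^{n-1}(1-w^{d_i^{\SL}})=\prod_{i=2}^n(1-w^i)=\frac{1}{1-w}\prod_{i=1}^n(1-w^i).
\]
Multiplying this with the previous relation, the factors of $1-w$ cancel exactly, leaving
\[
\mu_{\mathcal{R}_\Gamma^0\SL(n,\C)}(x,w)=\frac{1}{(1+x)^r}\cdot\frac{1}{n!}\prod_{i=1}^n(1-w^i)\sum_{\sigma\in S_n}\frac{\det(I+x A_\sigma^{\GL})^r}{\det(I-w A_\sigma^{\GL})}=\frac{1}{(1+x)^r}\mu_{\mathcal{R}_\Gamma^0\GL(n,\C)}(x,w).
\]

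There is no real obstacle here; the only subtle point is matching conventions to make sure the substitution $\lambda=-x$ in \eqref{eq:det} correctly produces $\det(I+xA_\sigma^{\SL})$ and that the resulting $(1+x)^{-r}$ is not accidentally moved under the sum. The identity also admits a heuristic explanation: $\GL(n,\C)$ is isogenous to $\SL(n,\C)\times\C^*$, and $(1+x)^r$ is the mixed Hodge polynomial of $(\C^*)^r$, matching the fact that the $\GL$ representation variety carries an additional torus worth of characters from the determinant of each of the $r$ abelian-rank generators; but as a formal proof the direct computation above is the most economical route.
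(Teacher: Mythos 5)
Your proposal is correct and follows exactly the route the paper intends: the corollary is stated immediately after Equation \eqref{eq:det} and the observation that the characteristic exponents of $\SL(n,\C)$ are $2,\ldots,n$, and the ``derivation'' the paper leaves implicit is precisely your substitution into Formula \eqref{eq:mu-hom-formula} with the two factors of $(1-w)$ cancelling and the $\sigma$-independent factor $(1+x)^{-r}$ pulled out of the sum. Your closing heuristic about the extra torus also matches the paper's own remark that the two cases ``only differ by a torus.''
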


\begin{rem}
The recursion formulae in \eqref{eq:recursion} and \eqref{eq:mu-hom-GLn}
have been implemented in a \textit{Mathematica} notebook available
on \cite{gcode}.
\end{rem}

\begin{example}
From \eqref{eq:recursion} and \eqref{eq:mu-hom-GLn} we can quickly
write down the first few cases for $\SL(n,\mathbb{C})$. To obtain
$\mu_{\mathcal{R}_{\Gamma}^{0}\SL(n,\mathbb{C})}(t,u,v)$ one just
needs to substitute $x=tuv$ and $w=t^{2}uv$.
\begin{eqnarray*}
\mu_{\mathcal{R}_{\Gamma}^{0}\SL(2,\mathbb{C})} & = & {\textstyle \frac{1}{2}}\left((1+w)(1+x)^{r}+(1-w)(1-x)^{r}\right).\\
\mu_{\mathcal{R}_{\Gamma}^{0}\SL(3,\mathbb{C})} & = & {\textstyle \frac{1}{6}}(1+2w+2w^{2}+w^{3})(1+x)^{2r}+{\textstyle \frac{1}{2}}(1-w^{3})(1-x^{2})^{r}\\
 &  & +{\textstyle \frac{1}{3}}(1-w-w^{2}+w^{3})(1-x+x^{2})^{r}.\\
\mu_{\mathcal{R}_{\Gamma}^{0}\SL(4,\mathbb{C})} & = & {\textstyle \frac{1}{24}}(1+w)(1+w+w^{2})(1+w+w^{2}+w^{3})(1+x)^{3r}\\
 &  & +{\textstyle \frac{1}{4}}(1+w+w^{2})(1-w^{4})(1+x)^{r}(1-x^{2})^{r}\\
 &  & +{\textstyle \frac{1}{8}}(1-w^{3})(1-w+w^{2}-w^{3})(1-x)^{r}(1-x^{2})^{2r}\\
 &  & +{\textstyle \frac{1}{3}}(1-w^{2})(1-w^{4})(1+x^{3})^{r}\\
 &  & +{\textstyle \frac{1}{4}}(1-w)(1-w^{2})(1-w^{3})(1+x+x^{2}+x^{3})^{r}.
\end{eqnarray*}
Putting $x=t$ and $w=t^{2}$ we recover the expressions for the Poincaré
polynomial in \cite{Ba} and \cite{RS}.\footnote{Note that our first term for $n=3$ corrects the corresponding term
in \cite[pg. 749]{Ba}.} Note that with $x=-1$, $w=1$ we confirm the vanishing of the Euler
characteristic. With $w=-x$ we get formulas for the $E$-polynomial,
and with $x=w=1$ (that is, $t=u=v=1$) we get: 
\[
\mu_{\mathcal{R}_{\Gamma}^{0}\SL(n,\mathbb{C})}\left(1,1,1\right)=2^{(n-1)r},
\]
the dimension of the total cohomology of $T^{r}$, confirming that
$H^{*}(\mathcal{R}_{\Gamma}^{0}\SL(n,\mathbb{C}))$ is a regrading
of $H^{*}(T^{r})$. 
\end{example}

\begin{example}
Consider now the group $G=\Sp(2n,\mathbb{C})$ which has rank $n$
and dimension $n(2n+1)$. Its Weyl group is the so-called \emph{hyperoctahedral
group}: the group of symmetries of the hypercube of dimension $n$,
denoted $C_{n}$, of order $|C_{n}|=2^{n}n!$. It can be described
as the subgroup of permutations of the set $S_{\pm n}:=\{-n,\ldots,-1,1,\ldots,n\}$
satisfying: 
\[
\sigma\in C_{n}\subset S_{\pm n}\quad\quad\Longleftrightarrow\quad\quad\sigma(-i)=-\sigma(i)\quad\forall1\leq i\leq n.
\]
The action of $g\in C_{n}$ on the dual of the Lie algebra $\mathfrak{sp}_{2n}\cong\mathbb{C}^{n}$
is the following natural action. If we denote by $e_{1},\ldots,e_{n}$
the standard basis of $\mathbb{C}^{n}$, and let $e_{-i}:=-e_{i}$,
then $g\cdot e_{i}=e_{\sigma(i)}$, for all $1\leq i\leq n$, where
$g\in C_{n}$ corresponds to the permutation $\sigma\in S_{\pm n}$.

Given that $\Sp(2,\mathbb{C})\cong\SL(2,\mathbb{C})$, we consider
the next case: $n=2$. $\Sp(4,\mathbb{C})$  has complex dimension 10, and
its Weyl group is $C_{2}$,
which is known to be isomorphic to the dihedral group of order $8$ (the
symmetries of the square): 
\[
C_{2}=\{e,a,a^{2},a^{3},ba,ba^{2},ba^{3}\},
\]
where $a$ acts by counter-clockwise rotation of $\frac{\pi}{2}$
(that is $e_{1}\mapsto e_{2}\mapsto-e_{1}\mapsto-e_{2}\mapsto e_{1}$)
and $b$ is the reflection along the first coordinate axis ($e_{1}\mapsto e_{1}$
and $e_{2}\mapsto-e_{2}$). Then, we have: 
\[
a=\left(\begin{array}{cc}
0 & -1\\
1 & 0
\end{array}\right),\quad\quad b=\left(\begin{array}{cc}
1 & 0\\
0 & -1
\end{array}\right)
\]
and simple computations give the following table, with $p_{g}(\lambda)=\det(I-\lambda A_{g})$.

\vspace{2mm}
 $\quad\quad\quad\quad\quad\quad\quad\quad\quad\quad\quad$
\begin{tabular}{c|c}
$g\in C_{2}$  & $p_{g}(\lambda)$\tabularnewline
\hline 
$e$  & $(1-\lambda)^{2}$\tabularnewline
$a,a^{3}$  & $1+\lambda^{2}$\tabularnewline
$a^{2}$  & $(1+\lambda)^{2}$\tabularnewline
$b,ba,ba^{2},ba^{3}$  & $1-\lambda^{2}$\tabularnewline
\end{tabular}

From this, since the characteristic degrees of $\C_{2}$
are $2,4$, we compute, using again $x=tuv$ and $w=t^{2}uv$: 
\begin{eqnarray*}
\mu_{\mathcal{R}_{\Gamma}^{0}\Sp(4,\mathbb{C})} & = & \frac{1}{2^{2}2!}(1-w^{2})(1-w^{4})\sum_{g\in C_{2}}\,\frac{p_{g}(-x)^{r}}{p_{g}(w)}\\
 & = & {\textstyle \frac{1}{8}}(1-w^{2})(1-w^{4})\left({\textstyle \frac{(1+x)^{2r}}{(1-w)^{2}}+2\frac{(1+x^{2})^{r}}{1+w^{2}}+\frac{(1-x)^{2r}}{(1+w)^{2}}+4\frac{(1-x^{2})^{r}}{1-w^{2}}}\right)\\
 & = & {\textstyle \frac{1}{8}}(1+w)(1+w+w^{2}+w^{3})(1+x)^{2r}+{\textstyle \frac{1}{4}}(1-w^{2})^{2}(1+x^{2})^{r}\\
 &  & +{\textstyle \frac{1}{8}}(1-w)(1-w+w^{2}-w^{3})(1-x)^{2r}+{\textstyle \frac{1}{2}}(1-w^{4})(1-x^{2})^{r}.
\end{eqnarray*}
Again, we note that with $x=t$ and $w=t^{2}$ we obtain the Poincaré polynomial. Setting $w=x=1$ we obtain $2^{2r}$, and setting $w=1=-x$ we get zero, both as expected. The above formula gives a new result even for 
${\mathcal{R}_{\mathbb{Z}^{2}}\Sp(4,\mathbb{C})}$, 
the 12 dimensional variety of pairs of commuting $\Sp(4,\mathbb{C})$ matrices. 
Indeed, we obtain the following Poincaré polynomial
\begin{eqnarray*}
P_{\mathcal{R}_{\mathbb{Z}^{2}}\Sp(4,\mathbb{C})}(t) = 
1+ t^2 + t^4 + 2(t^3+t^5+t^6+t^7+t^9) + 3t^{10},
\end{eqnarray*}
and $E$-polynomial $E_{\mathcal{R}_{\mathbb{Z}^{2}}\Sp(4,\mathbb{C})}(u,v)=(1-(uv)^2)(1-(uv)^4)$, as expected from Corollary
\ref{cor:Euler-char}
\end{example}

\subsection{$G$-Equivariant Cohomology of $\Rep_{\Gamma}^{0}(G)$ }

For a Lie group $G$, denote $G$-equivariant cohomology (over
$\C$) by $H_{G}$. We now resume our main setup: $G$ is a reductive
$\C$-group, $K$ is a maximal compact subgroup of $G$, $T$ is a
maximal torus in $G$ and $T_{K}$ is a compatible maximal torus in
$K$ (so $T_{K}=T\cap K$). Again, let $\Gamma$ be a finitely generated
nilpotent group of abelian rank $r\geq1$, so the torsion free part
of its abelianization is $\Z^{r}$.

Since $G$ and $K$ are homotopic, as are $\Rep_{\Gamma}^{0}(G)$
and $\Rep_{\Gamma}^0(K)$, we conclude there is an isomorphism in equivariant cohomology:
\[
H_{G}^{*}(\Rep_{\Gamma}^{0}(G))\cong H_{K}^{*}(\Rep_{\Gamma}^{0}(K)).
\]

Then, from Baird's thesis \cite{BaT}, precisely pages 39 and 55,
and Corollary 7.4.4, the $G$-equivariant and $K$-equivariant maps
in Diagram \eqref{key-ladder} imply we have the following isomorphisms:
\begin{eqnarray*}
H_{K}^{*}(\Rep_{\Gamma}^{0}(K)) & \cong & H_{K}^{*}(\Rep_{\Z^{r}}^{0}(K))\\
 & \cong & H_{K}^{*}((K/T_{K})\times T_{K}^{r})^{W}\\
 & \cong & H_{T_{K}}^{*}(T_{K}^{r})^{W}\\
 & \cong & [H^{*}(T_{K}^{r})\otimes H^{*}(BT_{K})]^{W}\\
 & \cong & [H^{*}(T^{r})\otimes H^{*}(BT)]^{W}.
\end{eqnarray*}

We have already computed the Hilbert series of this latter ring in Equation \eqref{eq:hilbert}. Thus we conclude: 
\begin{cor}
There is a MHS on the $G$-equivariant cohomology of $\Rep_{\Gamma}^{0}(G)$
and the $G$-equivariant mixed Hodge series is: 
\[
\mu_{\Rep_{\Gamma}^{0}(G)}^{G}=\frac{1}{|W|}\sum_{g\in W}\,\frac{\det\left(I+tuv\,A_{g}\right)^{r}}{\det\left(I-t^{2}uv\,A_{g}\right)}.
\]
\end{cor}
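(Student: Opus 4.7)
The plan is to read off both the existence of the mixed Hodge structure and the explicit formula directly from the chain of isomorphisms displayed just above the corollary, together with the Hilbert series computation already performed in Equation \eqref{eq:hilbert}. The statement is essentially a matter of repackaging work already done: the hard geometric input (the homotopy equivalence $\Rep_\Gamma^0(G)\simeq\Rep_\Gamma(K)$, the passage from $K$-equivariant to $T_K$-equivariant cohomology, and the Weyl averaging) is supplied by \cite{BS}, \cite{Be}, and Baird's identifications cited from \cite{BaT}.

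First, I would transport the mixed Hodge structure along the chain. The right-most ring $[H^{*}(T^{r})\otimes H^{*}(BT)]^{W}$ carries a canonical MHS: $T^{r}$ is a smooth algebraic variety whose cohomology is round and concentrated in tridegree $(1,1,1)$ generators; the classifying space $BT\simeq(\mathbb{C}P^{\infty})^{m}$ has pure cohomology with generators in tridegree $(2,1,1)$; the tensor product inherits a MHS by K\"unneth, and the $W$-action is algebraic, so taking $W$-invariants preserves the MHS by \eqref{eq:Groiso}. Pulling this structure back through the isomorphisms
\[
H_{G}^{*}(\Rep_{\Gamma}^{0}(G))\cong H_{K}^{*}(\Rep_{\Gamma}^{0}(K))\cong H_{K}^{*}(\Rep_{\mathbb{Z}^{r}}^{0}(K))\cong [H^{*}(T^{r})\otimes H^{*}(BT)]^{W}
\]
defines the MHS on the $G$-equivariant cohomology. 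I would remark that each step is justified: the first by the homotopy equivalences $G\simeq K$ and $\Rep_{\Gamma}^{0}(G)\simeq\Rep_{\Gamma}(K)$, the second by \cite{BS}, and the third by the $K$-equivariant cohomological isomorphism induced by $\varphi_{K}$ in Diagram \eqref{key-ladder} combined with Baird's identifications.

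Second, the $G$-equivariant mixed Hodge polynomial is by definition the triply graded Hilbert series of the ring $[H^{*}(T^{r})\otimes H^{*}(BT)]^{W}$, computed with respect to the tridegrees above. But this is precisely the content of Corollary \ref{cor:Hilbert} applied with $V_{0}=H^{2,1,1}(BT)$ (giving $H^{*}(BT)=S^{\bullet}V_{0}$) and $V_{1}=\cdots=V_{r}=H^{1,1,1}(T)$ (giving $H^{*}(T^{r})\cong(\wedge^{\bullet}V_{1})^{\otimes r}$), and it yields exactly the stated formula. The main potential obstacle is verifying that the MHS obtained this way is well-defined, i.e.\ independent of the particular model of $BT$ and the choice of maximal compact and maximal torus; but this follows from the naturality of MHSs on smooth projective varieties (each $(\mathbb{C}P^{N})^{m}$ is smooth projective), the compatibility of MHS with direct limits along closed immersions, and the conjugacy of maximal compact subgroups, so no new computation is required beyond what has already been done.
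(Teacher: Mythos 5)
Your proposal is correct and follows essentially the same route as the paper: transport the MHS along the chain $H_{G}^{*}(\Rep_{\Gamma}^{0}(G))\cong H_{K}^{*}(\Rep_{\Gamma}^{0}(K))\cong\cdots\cong[H^{*}(T^{r})\otimes H^{*}(BT)]^{W}$ supplied by the homotopy equivalences, \cite{BS}, and Baird's identifications, and then read off the triply graded Hilbert series already computed in Equation \eqref{eq:hilbert} via Corollary \ref{cor:Hilbert}. The additional remarks on well-definedness are fine but not needed beyond what the paper already establishes.
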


\section{Mixed Hodge Structure on \texorpdfstring{$\hom^{0}(\Gamma,G)\quot G$}{Hom(F,G)//G}}

\label{sec:mhs-char}

Now we prove the statements in Theorems \ref{thm:MHS} and \ref{thm:mu-formula}
on the connected component $\mathcal{M}_{\Gamma}^{0}G$ of the trivial
representation of the character variety $\mathcal{M}_{\Gamma}G=\hom(\Gamma,G)\quot G$.

We start with the free abelian case, $\Gamma\cong\mathbb{Z}^{r}$,
noting a number of corollaries to Theorem \ref{thm:Tequals0}. 
\begin{cor}
\label{cor-normalization} $\mathcal{M}_{\mathbb{Z}^{r}}^{0}G$ is
irreducible, and there exists a birational bijective morphism 
\[
\chi:T^{r}/W\to\mathcal{M}_{\mathbb{Z}^{r}}^{0}G
\]
which is the normalization map. In particular, we have equality of Grothendieck classes: $[T^{r}/W]=[\M_{\Z^{r}}^{0}G]$. 
\end{cor}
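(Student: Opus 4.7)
The plan is to derive this corollary from Theorem \ref{thm:Tequals0} by first constructing $\chi$ as a bijective morphism, then identifying it with the normalization via normality of $T^r/W$ combined with a finiteness/birationality argument.

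To construct $\chi$, I would start with the closed embedding $T \hookrightarrow G$, which induces a closed embedding $T^r \hookrightarrow \hom(\Z^r, G)$. Composing with the GIT quotient map gives a morphism $T^r \to \M_{\Z^r}G$, and since the restriction of $G$-conjugation to $T^r$ factors through $N_G(T)/T = W$, it descends to an algebraic morphism $\chi: T^r/W \to \M_{\Z^r}G$. Its image lies in $\M_{\Z^r}^{T}G = \M_{\Z^r}^{0}G$ by the definition of the torus component (and Theorem \ref{thm:Tequals0}). Irreducibility of $\M_{\Z^r}^{0}G$ then follows from irreducibility of $T^r/W$ (image of the irreducible variety $T^r$ under a finite quotient) together with surjectivity of $\chi$.

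Next, surjectivity of $\chi$ is exactly the content of Theorem \ref{thm:Tequals0}. For injectivity, I would invoke the classical fact that two commuting tuples in $T$ that are $G$-conjugate must be $W$-conjugate: if $g\,\mathbf{t}\,g^{-1} = \mathbf{t}'$ with $\mathbf{t},\mathbf{t}' \in T^r$, then both $T$ and $gTg^{-1}$ are maximal tori in the (connected) centralizer $C_G(\mathbf{t}')^0$, so are conjugate inside this centralizer; adjusting $g$ by such an element lands it in $N_G(T)$ and exhibits the desired $W$-conjugation. Birationality then follows by identifying a Zariski open set on which $\chi$ is an isomorphism, namely the locus of tuples whose $G$-centralizer is exactly $T$; this locus is open and dense in $T^r/W$, and $\chi$ restricts to an isomorphism onto its image there, since the $G$-orbit of such a tuple meets $T^r$ in exactly one $W$-orbit.

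Finally, to identify $\chi$ with the normalization map, I would observe that $T^r$ is smooth and $W$ finite, so $T^r/W$ is normal. Combined with birationality, the remaining point is finiteness of $\chi$; this follows from general GIT applied to the closed $N_G(T)$-stable subvariety $T^r \subset \hom^{0}(\Z^r, G)$, which yields that the induced map $T^r/W \to \M_{\Z^r}^{0}G$ is finite. A finite birational bijection from a normal variety onto a reduced irreducible variety is the normalization map, completing the argument. The main obstacle I expect is precisely this finiteness step: rather than black-boxing it through GIT, a direct approach would establish that $\mathbb{C}[T^r]^W$ is integral over the coordinate ring of $\M_{\Z^r}^{0}G$, which is essentially a Chevalley--Steinberg type restriction statement for commuting tuples and may require some care away from the simply connected semisimple case.
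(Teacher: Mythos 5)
Your route differs from the paper's: the paper disposes of this corollary in three lines by combining Theorem \ref{thm:Tequals0} (which gives $\mathcal{M}_{\mathbb{Z}^r}^{0}G=\mathcal{M}_{\mathbb{Z}^r}^{T}G$, hence irreducibility) with a citation to \cite{Sik}, where the map $T^r/W\to\mathcal{M}_{\mathbb{Z}^r}^{T}G$ is already shown to be a bijective birational morphism and in fact the normalization; the only new input is the equality of the two components. You instead set out to reprove Sikora's statement from scratch. Most of your reconstruction is sound and is essentially his argument: the descent of $T^r\to\mathcal{M}_{\mathbb{Z}^r}G$ to $T^r/W$, surjectivity via Theorem \ref{thm:Tequals0}, and the maximal-tori-inside-the-centralizer argument for injectivity. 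One small point there: since $\mathcal{M}_{\mathbb{Z}^r}G$ is a GIT quotient, $[\mathbf{t}]=[\mathbf{t}']$ a priori only means the orbit closures meet; you need to add that tuples in $T^r$ consist of commuting semisimple elements and hence have closed $G$-orbits, so equality of classes really is conjugacy. Also, over $\mathbb{C}$ an injective dominant morphism of irreducible varieties is automatically birational, so your open-locus argument is more than you need.

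The genuine gap is the finiteness of $\chi$, which you yourself flag as the main obstacle but do not close, and it cannot be waved away: a bijective birational morphism from a normal variety need not be the normalization (delete one of the two preimages of the node from the normalization of a nodal cubic to obtain a bijective birational morphism from a smooth curve that is neither finite nor the normalization), so without finiteness the final identification fails. Your proposed fix, ``general GIT applied to the closed $N_G(T)$-stable subvariety $T^r\subset\hom^{0}(\mathbb{Z}^r,G)$,'' does not work as stated: the standard GIT fact (a closed invariant subvariety induces a closed immersion, in particular a finite map, of GIT quotients) requires $G$-stability, and $T^r$ is only $N_G(T)$-stable. What is actually needed is the integrality statement you mention at the end, a Chevalley--Steinberg restriction theorem for $r$-tuples asserting that $\mathbb{C}[T^r]^{W}$ is integral over the image of $\mathbb{C}[\hom(\mathbb{Z}^r,G)]^{G}$; this is precisely the nontrivial content of the result in \cite{Sik}, and it is the step your proposal leaves open. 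Either supply that argument or, as the paper does, cite \cite{Sik} for it.
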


\begin{proof}
As noted earlier, $\mathcal{M}_{\mathbb{Z}^{r}}^{T}G$ is irreducible,
and we have shown that $\mathcal{M}_{\mathbb{Z}^{r}}^{0}G=\mathcal{M}_{\mathbb{Z}^{r}}^{T}G$.
We also know from \cite{Sik} that there is a bijective birational
morphism $T^{r}/W\to\mathcal{M}_{\mathbb{Z}^{r}}^{T}G$. The first sentence
follows since $T^{r}/W$ is normal (since the GIT quotient of a normal
variety is normal). Since $\chi$ is a bijective map, the statement  on Grothendieck classes follows from \cite[Page 115]{BB} (see also \cite{G}).
\end{proof}
We will say that a reductive $\C$-group $G$ is of \textit{classical
type} if its derived subgroup $DG$ admits a central isogeny by a
product of groups of type $\SL(n,\C)$, $\mathrm{Sp}(2n,\C)$, or
$\mathrm{SO}(n,\C)$ for varying $n$ (not necessarily all the same
$n$ within the product). 
\begin{cor}
\label{cor-classical} If $G$ is of classical type, then $\mathcal{M}_{\mathbb{Z}^{r}}^{0}G$
is normal and $\chi:T^{r}/W\to\mathcal{M}_{\mathbb{Z}^{r}}^{0}G$
is an isomorphism. 
\end{cor}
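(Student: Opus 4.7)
The plan is to observe that the two conclusions collapse into one. By Corollary \ref{cor-normalization}, the map $\chi:T^{r}/W\to\mathcal{M}_{\mathbb{Z}^{r}}^{0}G$ is a bijective birational morphism that realizes the normalization of its target. Since $T^{r}/W$ is the GIT quotient of the smooth (hence normal) affine variety $T^{r}$ by the finite group $W$, it is normal. Consequently, $\chi$ is an isomorphism if and only if $\mathcal{M}_{\mathbb{Z}^{r}}^{0}G$ is normal, so it suffices to establish either of the two claims of the corollary.

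I would then invoke the results of Sikora in \cite{Sik}. For the basic classical groups $\SL(n,\C)$, $\Sp(2n,\C)$, and $\SO(n,\C)$ (and also $\GL(n,\C)$), Sikora proves directly, by exhibiting generators of the coordinate ring of $\mathcal{M}_{\mathbb{Z}^{r}}^{T}G$ as trace polynomials that pull back to a set of generators of $\C[T^{r}]^{W}$, that the map $T^{r}/W\to\mathcal{M}_{\mathbb{Z}^{r}}^{T}G$ is an isomorphism of affine varieties. Combined with Theorem \ref{thm:Tequals0}, which identifies $\mathcal{M}_{\mathbb{Z}^{r}}^{T}G$ with $\mathcal{M}_{\mathbb{Z}^{r}}^{0}G$ for every reductive $\C$-group, this gives the corollary in the case where $G$ is itself one of these basic classical groups.

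To promote this to a general $G$ of classical type, I would use the standard structural description: there is a central isogeny $Z^{0}(G)\times\prod_{i}G_{i}\twoheadrightarrow G$ whose kernel $F$ is a finite central subgroup and each $G_{i}$ is a basic classical simple group. Taking $\hom(\mathbb{Z}^{r},-)\quot(-)$ turns this into a finite quotient $\mathcal{M}_{\mathbb{Z}^{r}}^{0}(Z^{0}(G)\times\prod_{i}G_{i})/F\to\mathcal{M}_{\mathbb{Z}^{r}}^{0}G$, and character varieties of direct products factor as direct products. Since each factor of the source is normal by the previous step (the central torus case being trivial), and since finite quotients of normal varieties are normal, $\mathcal{M}_{\mathbb{Z}^{r}}^{0}G$ is normal, which combined with the first paragraph yields the isomorphism $\chi$.

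The main obstacle is verifying that the reduction via central isogeny respects the identity component and the map $\chi$ on the nose; concretely, one needs that the isogeny restricts to a finite \'etale cover of maximal tori compatible with the respective Weyl group actions, so that the induced finite quotient on $T^{r}/W$ on the source matches the quotient on the character variety side. This is a bookkeeping exercise with maximal tori and Weyl groups under isogeny, but it is what converts Sikora's statements for the basic classical groups into a statement for all $G$ of classical type.
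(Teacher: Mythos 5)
Your proof follows essentially the same route as the paper's: reduce to $\mathcal{M}_{\mathbb{Z}^{r}}^{T}G=\mathcal{M}_{\mathbb{Z}^{r}}^{0}G$, cite Sikora's isomorphism for the basic classical groups (with the torus case being trivial), and propagate normality through direct products and finite central quotients via the central isogeny theorem, using that $\chi$ is already known to be the normalization map so that normality of the target forces it to be an isomorphism. The only small correction is that the finite quotient of character varieties induced by a central isogeny with kernel $F$ is by $F^{r}\cong\hom(\mathbb{Z}^{r},F)$ rather than by $F$ itself, which does not affect the argument since $F^{r}$ is still finite.
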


\begin{proof}
Given $\mathcal{M}_{\mathbb{Z}^{r}}^{0}G=\mathcal{M}_{\mathbb{Z}^{r}}^{T}G$
this follows from \cite{FS}. Here is a sketch of the result in \cite{FS}.
Sikora showed the result for $\SL(n,\C)$, $\mathrm{Sp}(2n,\C)$,
or $\mathrm{SO}(n,\C)$ in \cite{Sik}. It is trivially true for tori.
In general, $\M_{\Z^{r}}^{0}(G\times H)\cong\M_{\Z^{r}}^{0}G\times\M_{\Z^{r}}^{0}H$
and also $\M_{\Z^{r}}^{0}(G/F)\cong(\M_{\Z^{r}}^{0}G)/F^{r},$ for
finite central subgroups $F$. The result then follows from the central
isogeny theorem for reductive $\C$-groups and the facts that GIT
quotients of normal varieties are normal, and cartesian products of
normal varieties are normal. 
\end{proof}
Since $\mathcal{M}_{\mathbb{Z}^{r}}^{0}G=\mathcal{M}_{\mathbb{Z}^{r}}^{T}G$
we know for any $[\rho]\in\M_{\Z^{r}}^{0}G$ its image is contained
in some maximal torus which we may assume is $T$. We will say such
a representation is \textit{Zariski dense} if its image is Zariski
dense in $T$. We note that every representation in the identity component
is \textit{reducible}; that is, its image is contained in a proper
parabolic subgroup of $G$. For many choices of $\Gamma$, reducible
representations are singular points; see for example \cite{FL3,GLR}.
The next corollary is in contrast to this. 
\begin{cor}
\label{cor-smooth} Assume $r\geq2$, and that $[\rho]\in\mathcal{M}_{\mathbb{Z}^{r}}^{0}G$
is Zariski dense. Then 
\begin{enumerate}
\item $[\rho]$ is a smooth point, and 
\item the map $\chi:T^{r}/W\to\M_{\Z^{r}}^{0}G$ is étale at $[\rho]$. 
\end{enumerate}
\end{cor}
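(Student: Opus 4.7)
The plan is to establish (1) by computing the Zariski tangent space to $\mathcal{R}_{\mathbb{Z}^r}G$ at $\rho$ and then descending smoothness to the GIT quotient via Luna's étale slice theorem, and to deduce (2) from Corollary \ref{cor-normalization} by a Zariski's Main Theorem argument.

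By Theorem \ref{thm:Tequals0} I may conjugate $\rho$ so that its image lies in $T$, in which case Zariski density means $\rho(\mathbb{Z}^r)$ generates $T$ as an algebraic group. Three consequences are needed: (a) the centralizer $Z_G(\rho(\mathbb{Z}^r))$ equals $Z_G(T)=T$; (b) the $W$-stabilizer of the tuple $(\rho(x_1),\ldots,\rho(x_r))\in T^r$ is trivial, since $W$ acts faithfully on $T$; and (c) the orbit $G\cdot\rho$ is closed (as $\rho$ is semisimple) with reductive stabilizer $T$.

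Next I identify the Zariski tangent space $T_\rho\mathcal{R}_{\mathbb{Z}^r}G$ with the cocycle group $Z^1(\mathbb{Z}^r,\mathfrak{g})$, where $\mathfrak{g}$ is a $\mathbb{Z}^r$-module via $\mathrm{Ad}\circ\rho$. Decomposing $\mathfrak{g}=\mathfrak{t}\oplus\bigoplus_{\alpha}\mathfrak{g}_\alpha$ under $T$, the cocycle relation $(1-\alpha(\rho(x_j)))\,v_i^\alpha=(1-\alpha(\rho(x_i)))\,v_j^\alpha$ is vacuous on the $\mathfrak{t}$-components (contributing $r\dim T$ free parameters), while by Zariski density, for each root $\alpha$ some $\alpha(\rho(x_k))\neq 1$ pins down the $\mathfrak{g}_\alpha^r$-contribution to a single dimension. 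Summing gives $\dim Z^1=\dim G+(r-1)\dim T$, which equals $\dim\mathcal{R}_{\mathbb{Z}^r}^0G$ by Theorem \ref{thm-mhs-hom}, so $\rho$ is a smooth point of $\mathcal{R}_{\mathbb{Z}^r}^0G$. A parallel analysis of coboundaries $B^1=\mathrm{Im}\,\delta$, with $\delta(X)=((1-\mathrm{Ad}\,\rho(x_i))X)_i$, shows that $B^1\cap\mathfrak{t}^r=0$ and that $\delta$ maps $\mathfrak{g}_\alpha$ onto the $\mathfrak{g}_\alpha^r$-part of $Z^1$ for each $\alpha$; hence $H^1(\mathbb{Z}^r,\mathfrak{g})\cong\mathfrak{t}^r$, carrying the trivial $T$-action.

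With these inputs, Luna's étale slice theorem furnishes a $G$-equivariant étale map from a neighborhood of the zero section in $G\times_T N$, with $N=H^1(\mathbb{Z}^r,\mathfrak{g})$, onto a neighborhood of $G\cdot\rho$ in $\mathcal{R}_{\mathbb{Z}^r}^0 G$, descending on GIT quotients to an étale map $N\quot T\cong\mathfrak{t}^r\to\mathcal{M}_{\mathbb{Z}^r}^0 G$ around $[\rho]$. Smoothness of $\mathfrak{t}^r$ then yields (1). For (2), Corollary \ref{cor-normalization} identifies $\chi$ as the finite, bijective, birational normalization of $\mathcal{M}_{\mathbb{Z}^r}^0 G$; since by (1) the target is now smooth, hence normal, at $[\rho]$, the normalization must be a local isomorphism there by Zariski's Main Theorem, and in particular étale. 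The main obstacle I anticipate is the verification that the isotropy $T$-representation on the normal slice $N$ is trivial: this is the structural fact that forces $N\quot T$ to be smooth and drives both conclusions simultaneously.
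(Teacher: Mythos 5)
Your argument is correct, but it takes a genuinely different route from the paper. The paper disposes of both claims in two lines by quoting Sikora's Theorem 4.1: since $\mathcal{M}_{\mathbb{Z}^{r}}^{0}G=\mathcal{M}_{\mathbb{Z}^{r}}^{T}G$ (Theorem \ref{thm:Tequals0}), Sikora's result that Zariski dense classes are smooth points of the torus component gives (1), and his statement that $\chi$ induces an isomorphism of tangent spaces at such points gives (2). You instead reprove the substance of Sikora's theorem: the cocycle computation $Z^{1}(\mathbb{Z}^{r},\mathfrak{g})$, with the root-space relations collapsing to one dimension per root by Zariski density, correctly yields $\dim Z^{1}=\dim G+(r-1)\dim T$, which matches the local dimension at $\rho$ coming from the torus component (this lower bound is what actually closes the smoothness argument, slightly more than the bare appeal to Theorem \ref{thm-mhs-hom}); the identification $H^{1}\cong\mathfrak{t}^{r}$ with trivial $T$-action then makes Luna's slice theorem give a smooth \'etale-local model $\mathfrak{t}^{r}$ for $\mathcal{M}_{\mathbb{Z}^{r}}^{0}G$ at $[\rho]$. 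For (2) you replace the tangent-space comparison by the observation that a finite birational map onto a variety that is normal near a point is a local isomorphism there, which, combined with Corollary \ref{cor-normalization}, is clean and in fact yields the stronger conclusion that $\chi$ is a local isomorphism over the whole smooth locus. The trade-off is clear: the paper's proof is a citation, while yours is self-contained, exhibits the local model explicitly, and makes transparent the structural point you flag at the end --- triviality of the slice representation --- which is exactly the content of Remark \ref{rem:smooth}.
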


\begin{proof}
Since $\mathcal{M}_{\mathbb{Z}^{r}}^{0}G=\mathcal{M}_{\mathbb{Z}^{r}}^{T}G$,
and \cite[Theorem 4.1]{Sik} shows that if $[\rho]\in\M_{\Z^{r}}^{T}G$
and is Zariski dense then (1) holds on $\mathcal{M}_{\mathbb{Z}^{r}}^{T}G$,
(1) is also true for $\mathcal{M}_{\mathbb{Z}^{r}}^{0}G$. For (2),
\cite[Theorem 4.1]{Sik} shows that the map induces an isomorphism
of tangent spaces on the torus component when $\rho$ is Zariski dense;
this implies the map is étale at $[\rho]$ by (1). 
\end{proof}
\begin{rem}
If $r=1$, then we have $G\quot G\cong T/W$ and is smooth if $DG$
is simply-connected by \cite{Ste} and \cite[Proposition 3.1]{Bo}.
The converse is not true however, since $\mathrm{PSL}(2,\C)\quot\mathrm{PSL}(2,\C)\cong\C$
is smooth. 
\end{rem}

The map $\chi:T^{r}/W\to\mathcal{M}_{\mathbb{Z}^{r}}^{0}G$ is the
normalization map in general and it is an open question whether or
not it is an isomorphism in general \cite{Sik}. We note that $\chi$
is an isomorphism if and only if $\chi$ is étale and that holds if
and only if $\mathcal{M}_{\mathbb{Z}^{r}}^{0}G$ is normal. 

\begin{cor}
\label{cor-sing} Let $G$ be of classical type. Then the singular
locus of $\mathcal{M}_{\mathbb{Z}^{r}}^{0}G$ is of orbifold type;
that is, consists only of finite quotient singularities. 
\end{cor}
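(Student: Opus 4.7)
The plan is to reduce the claim to the observation that the quotient of a smooth variety by a finite group action has only finite quotient singularities. By Corollary \ref{cor-classical}, the hypothesis that $G$ is of classical type upgrades the bijective normalization map $\chi$ to an algebraic isomorphism
\[
\chi : T^{r}/W \xrightarrow{\;\cong\;} \mathcal{M}_{\mathbb{Z}^{r}}^{0}G,
\]
so it suffices to analyze the singular locus of $T^{r}/W$.

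Next, I would observe that $T^{r}$ is a smooth affine variety (being a product of algebraic tori) and that $W$ acts algebraically on it as a finite group via diagonal Weyl conjugation. For any $t \in T^{r}$, the stabilizer $W_{t} := \{w \in W : w\cdot t = t\}$ is a finite subgroup of $W$. By Luna's étale slice theorem (which, in the finite group case, reduces to the classical local linearization of finite group actions on smooth schemes), an étale neighborhood of $[t] \in T^{r}/W$ is isomorphic to $T_{t}(T^{r})/W_{t} \cong \mathbb{C}^{rm}/W_{t}$, where $m = \dim T$. Hence every point of $T^{r}/W$ admits an étale chart modeled on a quotient of affine space by a finite group, which is exactly the definition of a finite quotient (orbifold) singularity.

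To finish, transporting these local models through $\chi$ identifies the singular locus of $\mathcal{M}_{\mathbb{Z}^{r}}^{0}G$ with that of $T^{r}/W$; one may additionally invoke the Chevalley--Shephard--Todd theorem to describe the singular locus precisely as the image of those $t \in T^{r}$ whose stabilizer $W_{t}$ contains no pseudo-reflections acting on $T_{t}(T^{r})$. The proof is essentially a direct consequence of Corollary \ref{cor-classical} together with a standard fact about finite quotients, so there is no substantive obstacle; the only delicate point is to use the correct form of the étale slice theorem, but in the finite group setting this reduces to an elementary averaging argument and presents no difficulty.
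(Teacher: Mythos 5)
Your proposal is correct and follows essentially the same route as the paper: use Corollary \ref{cor-classical} to identify $\mathcal{M}_{\mathbb{Z}^{r}}^{0}G$ with $T^{r}/W$ and then invoke the fact that a finite quotient of a smooth variety has only finite quotient singularities. The paper states this last step in one line, whereas you spell out the \'etale local linearization at a point with stabilizer $W_{t}$; the extra detail (and the Shephard--Todd remark) is consistent with, but not needed beyond, the paper's argument.
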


\begin{proof}
In the case that $G$ is of classical type we know that $\chi$ is
an isomorphism since $\mathcal{M}_{\mathbb{Z}^{r}}^{0}G$ is normal.
Thus, the singular locus of $\M_{\Z^{r}}^{0}G$ is exactly the singular
locus of $T^{r}/W$. Since $T^{r}/W$ is the finite quotient of a
manifold, the result follows. 
\end{proof}
\begin{rem}
\label{rem:smooth}From this point-of-view, we can see easily why
the Zariski dense representations are smooth. The Zariski dense representations
are tuples $(t_{1},...,t_{r})$ that generate a Zariski dense subgroup
of $T$ (most $t_{i}$'s do this by themselves). If $w\cdot\rho=\rho$
then $w\cdot\rho(\gamma)=\rho(\gamma)$ for all $\gamma$. Since $\rho$
is Zariski dense we conclude that $w\cdot t=t$ for all $t\in T$.
We conclude $w=1$ and so $W$ acts freely on the set of Zariski dense
representations. This shows they are smooth points and the singular
locus is contained in the non-Zariski dense representations. 
\end{rem}

\begin{rem}
Assume $r\geq2$. If $\rho$ is not Zariski dense, then the identity
component of $A:=\overline{\rho(\Z^{r})}$ is, up to conjugation, a proper subtorus of
$T$. It seems reasonable to suppose that $A$ is contained in the
fixed locus of a non-trivial $w\in W$. The fixed loci $(T^{r})^{w}$
for $w\not=1$ are of codimension greater than 1 since $r\geq2$ and
$(T^{w})^{0}$ is a proper subtorus. So, in light of the Shephard-Todd
Theorem \cite{ShTo}, it appears likely that the non-Zariski dense
representations are exactly the singular locus (for $r\geq2$). 
\end{rem}

\subsection{The MHS on $\mathcal{M}_{\Gamma}^{0}G$}

Given an isomorphism of groups $\varphi:\Gamma_{1}\to\Gamma_{2}$
there exists a (contravariant) biregular morphism $\varphi^{*}:\M_{\Gamma_{2}}G\to\M_{\Gamma_{1}}G$
given by $\varphi^{*}([\rho])=[\rho\circ\varphi]$ with inverse $(\varphi^{-1})^{*}$.
Consequently, the topology of $\M_{\Gamma}G$ and its mixed Hodge
structure (MHS) are independent of the presentation of $\Gamma$.
Hence, the same holds for $\mathcal{M}_{\Gamma}^{0}G$, for any $\Gamma$.

Let us start with the free abelian case, $\Gamma\cong\mathbb{Z}^{r}$,
where we know that $\mathcal{M}_{\mathbb{Z}^{r}}^{0}G=\mathcal{M}_{\mathbb{Z}^{r}}^{T}G$. 
\begin{thm}
\label{thm-abelian-quot} Let $G$ be a reductive $\C$-group, $T$
a maximal torus, and $W$ the Weyl group. Then, the MHS of $\mathcal{M}_{\mathbb{Z}^{r}}^{0}G$
coincides with the one of $T^{r}/W$ and its mixed Hodge polynomial
is given by: 
\begin{eqnarray}
\mu_{\mathcal{M}_{\mathbb{Z}^{r}}^{0}G}\left(t,u,v\right) & = & \frac{1}{|W|}\sum_{g\in W}\left[\det\left(I+tuv\,A_{g}\right)\right]^{r},\label{eq:mu-general}
\end{eqnarray}
where $A_{g}$ is the automorphism induced on $H^{1}(T,\C)$ by $g\in W$,
and $I$ is the identity automorphism. 
\end{thm}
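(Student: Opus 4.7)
The plan is to transfer the MHS computation from $\mathcal{M}_{\mathbb{Z}^{r}}^{0}G$ to $T^{r}/W$ via the normalization map of Corollary \ref{cor-normalization}, and then compute the MHS of $T^{r}/W$ directly using the finite quotient formula \eqref{eq:Groiso} combined with standard character arithmetic for finite group invariants.

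First I would show that $\chi: T^{r}/W \to \mathcal{M}_{\mathbb{Z}^{r}}^{0}G$ induces an isomorphism of MHS on cohomology. By Corollary \ref{cor-normalization}, $\chi$ is a bijective birational morphism; being the normalization of a variety of finite type over $\mathbb{C}$, it is finite, hence proper in the Euclidean topology. A proper continuous bijection between Hausdorff spaces is a homeomorphism, so $\chi^{*}$ is a vector space isomorphism on singular cohomology. Since $\chi$ is an algebraic morphism, Deligne's functoriality makes $\chi^{*}$ a morphism of MHS, and a filtration-compatible linear isomorphism is automatically an MHS isomorphism. This establishes the equality of MHS on $\mathcal{M}_{\mathbb{Z}^{r}}^{0}G$ and on $T^{r}/W$.

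Next I would compute $\mu_{T^{r}/W}(t,u,v)$. By the finite quotient formula \eqref{eq:Groiso}, $H^{k,p,q}(T^{r}/W) \cong H^{k,p,q}(T^{r})^{W}$. The torus $T$ has round cohomology $H^{*}(T) \cong \bigwedge^{\bullet} H^{1}(T)$ with $H^{1}(T) \cong \mathfrak{t}^{*}$ of pure type $(1,1,1)$, so $H^{*}(T^{r}) \cong \bigl(\bigwedge^{\bullet} H^{1}(T)\bigr)^{\otimes r}$ with $W$ acting diagonally by $A_{g} \oplus \cdots \oplus A_{g}$ on $H^{1}(T^{r})$. By Lemma \ref{lem:sym-and-wedge}, the triply graded character of $g$ on $\bigwedge^{\bullet} H^{1}(T^{r})$ equals $\det(I + tuv\, A_{g})^{r}$; averaging over $W$ via \eqref{eq:PH-series} yields formula \eqref{eq:mu-general}. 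The Hodge-Tate property is inherited since taking $W$-invariants preserves the vanishing condition $p \neq q$.

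The main obstacle is the first step, since bijective algebraic morphisms need not be isomorphisms of varieties (e.g.\ Frobenius), and therefore one cannot directly read off an MHS isomorphism from Corollary \ref{cor-normalization}. The resolution is a two-step upgrade: finiteness of $\chi$ forces properness in the Euclidean topology, which combined with bijectivity and the Hausdorff property gives a homeomorphism on underlying analytic spaces; then Deligne's functoriality of MHS under algebraic morphisms promotes the resulting cohomology isomorphism to an MHS isomorphism. Once this bridge to $T^{r}/W$ is in place, the remaining computation reduces to a Molien-type character calculation on a round variety, which is purely combinatorial.
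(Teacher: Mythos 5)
Your argument is correct, but it reaches the key cohomological isomorphism by a genuinely different route than the paper. The paper does not use finiteness of the normalization at all: it places $\chi$ in a commutative square whose vertical arrows are the strong deformation retractions $T_K^r/W\hookrightarrow T^r/W$ and $\mathcal{N}_{\mathbb{Z}^r}^0K\hookrightarrow\mathcal{M}_{\mathbb{Z}^r}^0G$ from \cite{FL2}, and whose top arrow is Baird's homeomorphism $T_K^r/W\cong\mathcal{N}_{\mathbb{Z}^r}^0K$; commutativity then forces $\chi^*$ to be an isomorphism on cohomology, and algebraicity of $\chi$ upgrades it to an MHS isomorphism. Your replacement --- normalization $\Rightarrow$ finite $\Rightarrow$ proper in the analytic topology, so a continuous proper bijection onto a Hausdorff target is a homeomorphism --- is sound and purely algebro-geometric, and you are right to flag that bijectivity alone would not suffice (the partial normalization of a nodal curve is a bijective morphism that is not a classical homeomorphism). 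The trade-off is that your route leans on Corollary \ref{cor-normalization}, which itself rests on Theorem \ref{thm:Tequals0} and hence on the same compact-retraction machinery, so no dependency is really saved; what you gain is that the homeomorphism statement becomes independent of Baird's identification. Your closing step also differs mildly in presentation: the paper simply quotes \cite{FS} for formula \eqref{eq:mu-general}, whereas you rederive it from \eqref{eq:Groiso}, the round cohomology $H^*(T^r)\cong\wedge^{\bullet}H^1(T^r)$ with $W$ acting as $A_g\oplus\cdots\oplus A_g$, Lemma \ref{lem:sym-and-wedge}, and the averaging formula \eqref{eq:PH-series}; this is exactly the Molien-type computation underlying \cite{FS} and the paper's own Corollary \ref{cor:Hilbert}, so it is a correct, self-contained substitute.
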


\begin{proof}
We have the following commutative diagram with vertical arrows being
strong deformation retractions from \cite{FL2}: 
\begin{eqnarray*}
\xymatrix{T_{K}^{r}/W\ar@{^{(}->}[d]\ar[r]^{\cong} & \mathcal{N}_{\mathbb{Z}^{r}}^{0}K\ar@{^{(}->}[d]\\
T^{r}/W\ar[r]^{\chi} & \mathcal{M}_{\mathbb{Z}^{r}}^{0}G.
}
\end{eqnarray*}

Thus, $\chi$ induces isomorphisms in cohomology and since it is an
algebraic map, these isomorphisms preserve mixed Hodge structures.
Thus, the MHS on $T^{r}/W$ and on $\mathcal{M}_{\mathbb{Z}^{r}}^{0}G$
coincide. The formula then follows immediately from \cite{FS}. 
\end{proof}
\begin{thm}
\label{thm-nilpotent-mhs-quot} Let $\Gamma$ be a finitely generated
nilpotent group of abelian rank $r\geq1$. The MHS on $\mathcal{M}_{\Gamma}^{0}G$
coincides with the MHS on $T^{r}/W$. 
\end{thm}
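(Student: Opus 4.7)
The approach parallels the proof of Theorem~\ref{thm-mhs-hom} for the representation variety. The plan is to exhibit a chain of algebraic morphisms
\[
\mathcal{M}_{\mathbb{Z}^{r}}^{0}G \stackrel{\psi}{\longrightarrow} \mathcal{M}_{\Gamma_{Ab}}^{0}G \stackrel{\iota}{\longrightarrow} \mathcal{M}_{\Gamma}^{0}G,
\]
induced by the group surjections $\Gamma \twoheadrightarrow \Gamma_{Ab} \twoheadrightarrow \mathbb{Z}^{r}$ through precomposition at the level of representation varieties and descended via the GIT quotient by $G$-conjugation. The task is to show that each map is an isomorphism in singular cohomology; by functoriality of MHS for algebraic morphisms, these isomorphisms will upgrade to isomorphisms of mixed Hodge structures, and Theorem~\ref{thm-abelian-quot} will then identify the resulting MHS with that of $T^{r}/W$.

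For $\psi$, write $\Gamma_{Ab}\cong\mathbb{Z}^{r}\oplus F$ with $F$ a finite abelian group. The claim is that $\psi$ is already an algebraic isomorphism, with inverse induced by restriction along $\mathbb{Z}^{r}\hookrightarrow\Gamma_{Ab}$. It suffices to show that every $\rho\in\hom^{0}(\Gamma_{Ab},G)$ restricts trivially on $F$. For any finite $F$ and connected reductive $G$, the variety $\hom(F,G)$ is a finite disjoint union of $G$-conjugation orbits of tuples of semisimple elements; in particular, the trivial representation $e_{F}$ forms its own connected component. Hence the projection $\hom^{0}(\Gamma_{Ab},G)\to\hom(F,G)$ induced by $\Gamma_{Ab}\to F$ is constant equal to $e_{F}$, and passing to GIT quotients makes $\psi$ an isomorphism.

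For $\iota$, proceed via the compact analog using the commutative diagram
\[
\xymatrix{
\mathcal{M}_{\Gamma_{Ab}}^{0}G \ar[r]^{\iota} & \mathcal{M}_{\Gamma}^{0}G \\
\mathcal{N}_{\Gamma_{Ab}}^{0}K \ar@{^{(}->}[u] \ar[r]^{\cong} & \mathcal{N}_{\Gamma}^{0}K \ar@{^{(}->}[u]
}
\]
whose vertical maps are the strong deformation retractions from \cite{FL2} (restricted to the identity components by continuity), and whose bottom horizontal arrow is the $K$-conjugation quotient of the $K$-equivariant homeomorphism $\hom^{0}(\Gamma_{Ab},K)\cong\hom^{0}(\Gamma,K)$ of \cite{BS}. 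Commutativity together with the three homotopy equivalences in the diagram then force $\iota$ to be a homotopy equivalence, and in particular a cohomology isomorphism.

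The principal obstacle is verifying that the diagram above commutes and that the SDR of \cite{FL2} is natural with respect to precomposition by the group homomorphism $\Gamma\to\Gamma_{Ab}$. This should follow cleanly from the explicit construction of the SDR, which individually deforms the eigenvalues of each $\rho(\gamma_{i})$ and is therefore manifestly natural under pullback by a group homomorphism. Once this compatibility is in place, the composition $\iota\circ\psi$ is an algebraic cohomology isomorphism and hence an isomorphism of MHS, reducing the theorem to Theorem~\ref{thm-abelian-quot}.
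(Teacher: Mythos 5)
Your argument is sound in outline but takes a genuinely different route from the paper, whose entire proof is a one-line appeal to \cite[Corollary 1.4]{BS}, which already supplies algebraic maps inducing cohomology isomorphisms that reduce $\mathcal{M}_{\Gamma}^{0}G$ to the free abelian case; Theorem \ref{thm-abelian-quot} then finishes. What you do instead is reconstruct that input by transporting the paper's proof of Theorem \ref{thm-mhs-hom} (Diagram \eqref{key-ladder}) to the character-variety level. Your treatment of the torsion part is a correct and self-contained addition: since $F$ is finite, $Z^{1}(F,\mathfrak{g})=\hom(F,\mathfrak{g})=0$ at the trivial representation, so the trivial representation is an isolated point of $\hom(F,G)$, every representation in the identity component kills $F$, and $\psi$ is indeed an algebraic isomorphism. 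Note also that your ``principal obstacle'' is not one: the square commutes for trivial reasons, since all four maps are inclusions or precompositions, and no naturality of the retraction itself is needed --- only that each vertical inclusion is a homotopy equivalence.

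The one genuine gap is the justification of the right-hand vertical map. The strong deformation retraction $\mathcal{N}_{\Gamma}^{0}K\hookrightarrow\mathcal{M}_{\Gamma}^{0}G$ is provided by \cite{FL2} only for abelian groups, so it covers $\Gamma_{Ab}$ and $\Z^{r}$ but not a general nilpotent $\Gamma$. For the right vertical you need Bergeron's conjugation-equivariant retraction $\hom(\Gamma,G)\to\hom(\Gamma,K)$ from \cite{Be}, together with an argument that it descends to a cohomology isomorphism of the conjugation/GIT quotients (e.g.\ via \cite[Proposition 3.4]{FLR}). This step cannot be sidestepped by arguing that $\iota$ is surjective: for reductive $G$ the identity component $\hom^{0}(\Gamma,G)$ contains representations with non-abelian unipotent image (contractible to the trivial representation by conjugation along a one-parameter subgroup), so $\iota$ is genuinely not onto and the homotopy-theoretic comparison is unavoidable. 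With the corrected citations your argument goes through and yields the same conclusion as the paper's citation of \cite{BS}.
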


\begin{proof}
This follows from \cite[Corollary 1.4]{BS}, where they prove isomorphisms
in cohomology given by algebraic maps. 
\end{proof}

\begin{cor}
\label{cor-main-2} Let $\Gamma$ be a finitely generated nilpotent
group of abelian rank $r\geq1$. Then, for all reductive $\C$-groups
$G$ we have: 
\begin{eqnarray*}
\mu_{\mathcal{M}_{\Gamma}^{0}G}\left(t,u,v\right) & = & \frac{1}{|W|}\sum_{g\in W}\left[\det\left(I+tuv\,A_{g}\right)\right]^{r}.
\end{eqnarray*}
\end{cor}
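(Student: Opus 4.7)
The plan is to combine Theorem \ref{thm-nilpotent-mhs-quot} with Theorem \ref{thm-abelian-quot}, reducing the nilpotent case to a direct invocation of the free abelian formula. By Theorem \ref{thm-nilpotent-mhs-quot}, the MHS on $\mathcal{M}_{\Gamma}^{0}G$ coincides with the MHS on $T^{r}/W$, and in particular depends only on $r$ and on $G$. Hence $\mu_{\mathcal{M}_{\Gamma}^{0}G}(t,u,v) = \mu_{\mathcal{M}_{\mathbb{Z}^{r}}^{0}G}(t,u,v)$, and the right-hand side is precisely the polynomial already evaluated in Theorem \ref{thm-abelian-quot}; this yields the claim in one line.

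As a sanity check on the explicit form of the right-hand side, one can verify directly that $T^{r}/W$ has the stated mixed Hodge polynomial. Since $H^{*}(T,\C) \cong \bigwedge^{\bullet} H^{1}(T,\C)$ is round with $H^{1}(T,\C)$ in triple degree $(1,1,1)$, the K\"unneth formula gives $H^{*}(T^{r},\C) \cong H^{*}(T,\C)^{\otimes r}$ with $W$ acting diagonally. Lemma \ref{lem:sym-and-wedge} then yields the graded-character $\chi_{g}(H^{*}(T^{r})) = \det(I + tuv\, A_{g})^{r}$, and averaging over $W$ using \eqref{eq:Groiso} together with \eqref{eq:PH-series} produces exactly the polynomial in the statement.

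Since the two deep inputs are already in place---Theorem \ref{thm-nilpotent-mhs-quot} supplying the nilpotent-to-abelian reduction via the algebraic cohomology isomorphisms of \cite{BS}, and Theorem \ref{thm-abelian-quot} furnishing the explicit formula in the free abelian case---no genuine obstacle remains for this corollary. The only conceptual point worth stressing is that the algebraicity of the cohomology isomorphisms used in Theorem \ref{thm-nilpotent-mhs-quot} is essential: it is what transfers the full MHS, not merely the Poincar\'e polynomial, so the mixed variable weights $u$ and $v$ pass through the reduction unchanged.
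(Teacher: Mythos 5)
Your proof is correct and follows the paper's own route exactly: the paper also deduces Corollary \ref{cor-main-2} immediately from Theorem \ref{thm-nilpotent-mhs-quot} and Theorem \ref{thm-abelian-quot}. Your additional sanity check computing $\mu_{T^{r}/W}$ via Lemma \ref{lem:sym-and-wedge} and \eqref{eq:PH-series} is a correct re-derivation of the formula the paper imports from \cite{FS}, so nothing is missing.
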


\begin{proof}
This follows directly from Theorem \ref{thm-abelian-quot} and Theorem
\ref{thm-nilpotent-mhs-quot}. 
\end{proof}

Note that the $G$-equivariant cohomology of the moduli space $\M_{\Gamma}^{0}(G)$
is the usual cohomology since the $G$-action is trivial on $\M_{\Gamma}^{0}(G)$.

\begin{cor}
\label{thm-csepoly}The compactly supported mixed Hodge polynomial
of $\M_{\Z^{r}}^{0}(G)$ is: 
\[
\mu_{\M_{\Z^{r}}^{0}(G)}^{c}(t,u,v)=\frac{t^{r\dim T}}{|W|}\sum_{g\in W}\left[\det(tuvI+A_{g})\right]^{r}.
\]
\end{cor}

\begin{proof}
If a variety $X$ of (complex) dimension $d$ satisfies Poincaré duality,
then $\mu_{X}$ and $\mu_{X}^{c}$ are related by: $\mu_{X}^{c}(t,u,v)=(t^{2}uv)^{d}\mu_{X}(t^{-1},u^{-1},v^{-1});$ see \cite[Remark $3.10(1)$]{FS}. From Theorem \ref{thm-abelian-quot},
the mixed Hodge structures and polynomials of $\M_{\Z^{r}}^{0}(G)$
and $T^{r}/W$ coincide. Since $T^{r}/W$ is an orbifold, it satisfies
Poincaré duality for MHSs (see \cite[Section 4.2]{FS}). Hence, using $d=\dim T$, we have:
\begin{eqnarray*}
\mu_{T^{r}/W}^{c}(t,u,v) & = & (t^{2}uv)^{rd}\mu_{T^{r}/W}(t^{-1},u^{-1},v^{-1})\\
 & = & (t^{2}uv)^{rd}\frac{1}{|W|}\sum_{g\in W}\left[\det\left(I+\frac{1}{tuv}\,A_{g}\right)\right]^{r}\\
 & = & \frac{t^{rd}}{|W|}\sum_{g\in W}\left[(tuv)^{d}\det\left(I+\frac{1}{tuv}\,A_{g}\right)\right]^{r}
\end{eqnarray*}
as claimed, since $A_{g}$ are automorphisms of the Lie algebra of
$T$.
\end{proof}

\subsection{Examples for classical groups}

As in the case of representation varieties, the character varieties
for $\GL(n,\mathbb{C})$ and $\SL(n,\mathbb{C})$ also allow closed
expressions in terms of partitions $\pi$ of $n$. In \cite[Thm 5.13]{FS},
it was shown that $\mathcal{M}_{\mathbb{Z}^{r}}\GL(n,\mathbb{C})$
has round cohomology, so that $x=tuv$ is the only relevant variable,
and that: 
\[
\mu_{\mathcal{M}_{\mathbb{Z}^{r}}\GL(n,\mathbb{C})}\left(x\right)=\mu_{\mathcal{M}_{\mathbb{Z}^{r}}\SL(n,\mathbb{C})}\left(x\right)\,(1+x)^{r}.
\]
From the present analysis, the same formulas work also for the identity
components of the character varieties of any nilpotent group $\Gamma$
with abelianization $\mathbb{Z}^{r}$. Moreover, we can also obtain
a recurrence relation as follows. 
\begin{prop}
Let $G=\GL(n,\mathbb{C})$ and write $\nu_{n}^{r}(x):=\mu_{\mathcal{M}_{\Gamma}^{0}G}(t,u,v)$
for a nilpotent group $\Gamma$, of abelian rank $r$. Then, with
$h(x):=(1-x)^{r}$, we have: 
\begin{equation}
\nu_{n}^{r}(x)=\frac{1}{n}\sum_{k=1}^{n}h((-x)^{k})\,\nu_{n-k}^{r}(x).\label{eq:recursion-1}
\end{equation}
\end{prop}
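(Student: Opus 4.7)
The plan is to mirror the argument used in the proof of Proposition \ref{prop:recursion}, but with the character variety formula from Corollary \ref{cor-main-2} in place of the representation variety one. The key simplification is that we no longer have a denominator involving $\det(I - wA_g)$, only a numerator, so the associated plethystic identity is correspondingly simpler.

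First I would start from Corollary \ref{cor-main-2} applied to $G = \GL(n,\mathbb{C})$ with Weyl group $W = S_n$, and substitute the explicit cycle-index expression $\det(I - \lambda A_\sigma) = \prod_{j=1}^n (1-\lambda^j)^{\sigma_j}$ (valid for $G = \GL(n,\mathbb{C})$ as recorded earlier in the paper). Setting $\lambda = -x$ and $h(x) = (1-x)^r$, this yields
\[
\nu_n^r(x) = \frac{1}{n!} \sum_{\sigma \in S_n} \prod_{j=1}^n h((-x)^j)^{\sigma_j},
\]
which is precisely the cycle index polynomial of $S_n$ evaluated at $p_j = h((-x)^j)$.

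Next I would invoke the standard generating-function identity for cycle index polynomials (equivalently, the plethystic exponential used in \cite{Fl}), namely
\[
1 + \sum_{n \geq 1} \nu_n^r(x)\, y^n \;=\; \exp\!\left(\sum_{k \geq 1} \frac{h((-x)^k)\, y^k}{k}\right),
\]
with the convention $\nu_0^r(x) \equiv 1$. This is the analogue of the identity for $\phi_n(z,w)$ used in the proof of Proposition \ref{prop:recursion}, and it is a direct specialization (with trivial denominator) of the formula from \cite[Thm 3.1]{Fl}.

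Finally, differentiating the above identity with respect to $y$ gives
\[
\sum_{n \geq 1} n\, \nu_n^r(x)\, y^{n-1} \;=\; \left(1 + \sum_{m \geq 1} \nu_m^r(x)\, y^m\right)\left(\sum_{k \geq 1} h((-x)^k)\, y^{k-1}\right),
\]
and extracting the coefficient of $y^{n-1}$ yields the claimed recursion
\[
n\, \nu_n^r(x) = \sum_{k=1}^n h((-x)^k)\, \nu_{n-k}^r(x).
\]
There is no real obstacle here beyond bookkeeping: the only substantive input is the plethystic exponential identity, which is already used in the companion Proposition \ref{prop:recursion} and follows from \cite[Thm 3.1]{Fl}. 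All remaining steps are formal manipulations of generating functions.
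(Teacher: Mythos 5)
Your proposal is correct and follows essentially the same route as the paper: both arguments reduce the formula of Corollary \ref{cor-main-2} for $W=S_n$ to a cycle-index/plethystic-exponential identity (the paper cites \cite[Thm 3.1]{Fl} for the generating series of $\psi_n(z):=\nu_n^r(-z)$), then differentiate in $y$ and extract coefficients. The only difference is cosmetic: you work directly in the variable $-x$ and make the cycle-index structure explicit, whereas the paper introduces $\psi_n(z)$ and substitutes $z=-x$ at the end.
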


\begin{proof}
As in Proposition \ref{prop:recursion}, define $\psi_{n}(z)$ to
be the rational function of $z$: 
\[
\psi_{n}(z):=\frac{1}{n!}\sum_{g\in S_{n}}\det\left(I-z\,A_{g}\right)^{r}=\nu_{n}^{r}\left(-z\right),
\]
with $\psi_{0}(z)\equiv1$. By \cite[Thm 3.1]{Fl}, the generating
series for the $\psi_{n}(z)$ is now the plethystic exponential $1+\sum_{n\geq1}\psi_{n}(z)\,y^{n}=\mathsf{PE}(h(z)\,y)$
with $h(z)=(1-z)^{r}$. As before, the derivative with respect to
$y$ now gives: 
\[
\sum_{n\geq1}n\psi_{n}(z)\,y^{n-1}=\left(1+\sum_{m\geq1}\psi_{m}(z)\,y^{m}\right)\left(\sum_{k\geq1}h(z^{k})\,y^{k-1}\right),
\]
which, by picking the coefficient of $y^{n}$, leads to the recurrence:
\begin{equation}
\psi_{n}(z)=\frac{1}{n}\sum_{k=1}^{n}h(z^{k})\,\psi_{n-k}(z),\label{eq:psi}
\end{equation}
so the proposition follows by replacing $z=-x$ in Equation \eqref{eq:psi}. 
\end{proof}

\subsection{Point count over finite fields and compactly supported $E$-polynomials}
\label{subsec:poly-count}

In this subsection, we show that our formulae for mixed Hodge polynomials
also compute the number of points of the identity component of character varieties of free abelian groups over finite fields.

Let $X$ be a separated scheme of finite type over $\mathbb{Z}$.
We say that $X$ is \emph{polynomial count}, with counting polynomial
$\mathcal{P}_{X}(x)\in\mathbb{Z}[x]$ if for all but finitely many
primes $p$, and finite fields $\mathbb{F}_{q}$ with $q=p^{k}$, its number of $\mathbb{F}_{q}$-points is given
by $\#X(\mathbb{F}_{q})=\mathcal{P}{}_{X}(q)$. By extension of scalars,
we can consider the varieties $X(\mathbb{C})$ and $X(\overline{\mathbb{F}}_{q})$,
respectively, over $\mathbb{C}$ and over the algebraic closure $\overline{\mathbb{F}}_{q}$.

Next, consider the $k$-th compactly supported $l$-adic cohomology
of $X(\overline{\mathbb{F}}_{q})$, denoted $H_{c}^{k}(X(\overline{\mathbb{F}}_{q}),\overline{\mathbb{Q}}_{l})$ for a prime $l$ with $\gcd(l,q)=1$, and the Frobenius morphism $$F:X(\overline{\mathbb{F}}_{q})\to X(\overline{\mathbb{F}}_{q}),$$
whose fixed points are precisely the $\mathbb{F}_{q}$ points of $X$. 

Following Dimca-Lehrer, we say that $X$ is \emph{minimally pure} if $X$ is irreducible of dimension $n$, and $F$ acts on $H_{c}^{k}(X(\overline{\mathbb{F}}_{q}),\overline{\mathbb{Q}}_{l})$ with all eigenvalues equal to $q^{k-n}$ (see \cite[Definition 5.1]{DiLe}) (This notion is the analogue of round, for $X$ smooth over $\C$).

Now consider the $\mathbb{Z}$-scheme $X=Spec(\mathbb{Z}[x_{i},y_{i}]/(x_{i}y_{i}-1))$,
with $2n$ variables $x_{i}$ and $y_{i}$, whose complex variety is
a torus $X_{\mathbb{C}}=(\mathbb{C}^{*})^{n}$. According to \cite[Thm. 5.4]{DiLe}, $X$ is minimally pure.

\begin{thm}\label{thm:torus-count}
Fix $r\in\mathbb{N}$, and a prime power $q$, and let $T_{\mathbb{Z}}$ be a $\mathbb{Z}$-scheme such that $T_{\mathbb{C}}$ is a maximal torus of a reductive ${\mathbb{C}}$-group with Weyl group $W$. Then, the quotient of $T^r_\Z$ by the diagonal action of $W$, denoted $T_{\mathbb{Z}}^{r}/W$, is polynomial count and we have:
\[
\#(T_{\mathbb{Z}}^{r}/W)(\mathbb{F}_{q})=\frac{1}{|W|}\sum_{g\in W}\left[\det\left(qI-\,A_{g}\right)\right]^{r}.
\]
\end{thm}

\begin{proof}
We apply the following general result. If $H$ is a finite group acting
on a $\mathbb{Z}$-scheme $Y$, we have:
$$H_{c}^{k}((Y/H)(\overline{\mathbb{F}}_{q}),\overline{\mathbb{Q}}_{l})\cong H_{c}^{k}(Y(\overline{\mathbb{F}}_{q}),\overline{\mathbb{Q}}_{l})^{H},$$
as in \cite[proof of Prop. 5.5]{DiLe}. Now let $Y=T_{\mathbb{Z}}^{r}$
and $H=W$. Since $T_{\mathbb{Z}}^{r}$ is minimally pure, the Frobenius
morphism acts on $V_{q}^{k}:=H_{c}^{k}(T_{\mathbb{Z}}^{r}(\overline{\mathbb{F}}_{q}),\overline{\mathbb{Q}}_{l})^{W}$ with all eigenvalues equal to $q^{k-d}$, $d=\dim T_{\mathbb{Z}}^{r}$.

Since $(T_{\mathbb{Z}}^{r}/W)(\mathbb{F}_{q})$ consists precisely
of the Frobenius fixed points of $(T_{\mathbb{Z}}^{r}/W)(\overline{\mathbb{F}}_{q})$,
we apply Grothendieck's fixed point formula (see for example \cite[Equation $(5.3.1)$]{DiLe}), to obtain:
$$
\#(T_{\mathbb{Z}}^{r}/W)(\mathbb{F}_{q})=\sum_{k=0}^{2d}(-1)^{k}\mathrm{Tr}(F,\,V_{q}^{k}).
$$ 
This is a polynomial in $q$ since $\mathrm{Tr}(F,\,V_{q}^{k})$, the trace of $F$ on
$V_{q}^{k}$, is a sum of powers
of $q$. Hence, by Katz's theorem \cite{HaRo} and \cite{Se}, the compactly supported $E$-polynomial of $(T_{\mathbb{Z}}^{r}/W)_{\mathbb{C}}=T_{\mathbb{C}}^{r}/W$ coincides with the counting polynomial, with $q=uv$. The required formula then comes from the compactly supported mixed Hodge polynomial in Corollary \ref{thm-csepoly}, setting $t=-1$ and $q=uv$.
\end{proof}

\begin{cor}
The counting polynomial of $\M_{\Z^{r}}^{0}(G)$ is 
\[
\mathcal{P}_{\M_{\Z^{r}}^{0}(G)}(x)=\frac{1}{|W|}\sum_{g\in W}\left[\det(xI-A_{g})\right]^{r}.
\]
\end{cor}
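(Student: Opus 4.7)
The plan is to derive this corollary as an immediate consequence of the two preceding results, Theorem \ref{thm-charvarpc} and Theorem \ref{thm-csepoly}, via Katz's theorem on polynomial-count varieties recalled in Section \ref{subsec:poly-count}.

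First, by Theorem \ref{thm-charvarpc}, the variety $\M_{\Z^{r}}^{0}(G)$ is polynomial count. The result of Katz (appendix to \cite{HaRo}), quoted in Section \ref{subsec:poly-count}, then asserts that its counting polynomial $P_{\M_{\Z^{r}}^{0}(G)}(x)$ and its compactly supported Serre polynomial are related by
\[
E^{c}_{\M_{\Z^{r}}^{0}(G)}(u,v)=P_{\M_{\Z^{r}}^{0}(G)}(uv).
\]

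Second, I would extract the Serre polynomial from Theorem \ref{thm-csepoly}. By definition, the Serre polynomial is the specialization $E^{c}_{X}(u,v)=\mu^{c}_{X}(-1,u,v)$, since it is obtained as the alternating sum of the compactly supported mixed Hodge numbers. Substituting $t=-1$ into the formula of Theorem \ref{thm-csepoly} gives
\[
E^{c}_{\M_{\Z^{r}}^{0}(G)}(u,v)=\frac{(-1)^{r\dim T}}{|W|}\sum_{g\in W}\left[\det(-uv\,I+A_{g})\right]^{r}=\frac{(-1)^{r\dim T}}{|W|}\sum_{g\in W}\left[\det(A_{g}-uv\,I)\right]^{r}.
\]

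Combining the two identities and setting $x=uv$ yields exactly the formula claimed in the corollary. There is no substantial obstacle: both ingredients, the polynomial-count property and the explicit compactly supported mixed Hodge polynomial, have already been established, and Katz's theorem is precisely the bridge needed. The only mild care required is to track the sign $(-1)^{r\dim T}$ through the substitution $t=-1$, which accounts for the factorization $\det(-uv I + A_g) = (-1)^{\dim T}\det(uv I - A_g)$ at each $g\in W$, giving the stated form with $\det(A_g - xI)$.
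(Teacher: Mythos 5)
Your proposal is correct and follows essentially the same route as the paper: invoke Theorem \ref{thm-charvarpc} together with Katz's theorem to get $P_{\M_{\Z^{r}}^{0}(G)}(uv)=E^{c}_{\M_{\Z^{r}}^{0}(G)}(u,v)=\mu^{c}_{\M_{\Z^{r}}^{0}(G)}(-1,u,v)$, then substitute $t=-1$ into Theorem \ref{thm-csepoly}. The only quibble is your closing remark about the sign: since $-uv\,I+A_{g}=A_{g}-uv\,I$ as matrices, no determinant sign manipulation is needed, and the prefactor $(-1)^{r\dim T}$ comes solely from evaluating $t^{r\dim T}$ at $t=-1$; this does not affect the correctness of your computation.
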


\begin{proof}

By Corollary \ref{cor-normalization}, $\M_{\Z^{r}}^{0}(G)$ is bijective to $T^{r}/W$ via the normalization map $\chi$.  Moreover, $\chi$ is induced by the inclusion $T^r\hookrightarrow \Rep_{\Z^{r}}^{0}(G)$ and so is defined over the ring of integers (with the possible inversion of a finite number of primes).
Thus the counting functions of these two varieties are equal and so the result follows from Theorem \ref{thm:torus-count}.
\end{proof}

\begin{rem}
The smooth model for $\Rep_{\Z^{r}}^{0}(G)$,
namely $(G/T)\times_{W}T^{r}$, is also polynomial count since $W$
acts freely on $(G/T)\times T^{r}$, the product of polynomial count
varieties is polynomial count, and $T^{r}$ and $G/T$ are polynomial
count by \cite{Br}. However, this does not show $\Rep_{\Z^{r}}^{0}(G)$
is polynomial count since the map relating $\Rep_{\Z^{r}}^{0}(G)$
and $(G/T)\times_{W}T^{r}$, although a cohomological isomorphism,
is neither injective nor surjective. 
\end{rem}

\section{Exotic Components}\label{sec:exotic}

In this last section we describe the MHS on the full character varieties
of $\Z^{r}$ (not only the identity component) in special cases described
in \cite{ACG}. Let $p$ be a prime integer, and $\Z_{p}$ be the
cyclic group or order $p$. We will use the same notation for center
of $\SU(p)$ which is realized as the subgroup of scalar matrices
with values $p$-th roots of unity. Let $\Delta(p)$ be the diagonal
of $\Z_{p}$ in $(\Z_{p})^{m}=Z(\SU(p)^{m})$. Let $K_{m,p}:=\SU(p)^{m}/\Delta(p)$.
For example, $K_{1,p}=\mathrm{PU}(p)$, the projective unitary group.

In \cite{ACG}, all the components in $\hom(\Z^{r},K_{m,p})$ and
$\M_{\Z^{r}}K_{m,p}$ are described. In particular, $\M_{\Z^{r}}K_{m,p}$
consists of the identity component 
\[
\M_{\Z^{r}}^{0}K_{m,p}\cong(S^{1})^{(p-1)rm}/(S_{p})^{m}
\]
where $S_{p}$ is the symmetric group on $p$ letters, and 
\[
N(p,m):=\frac{p^{(m-1)(r-2)}(p^{r}-1)(p^{r-1}-1)}{p^{2}-1}
\]
discrete points.

There is a one-to-one correspondence between the isolated points in
$\M_{\Z^{r}}K_{m,p}$ and non-identity path-components in $\hom(\Z^{r},K_{m,p})$.
Each such path-component is isomorphic to the homogeneous space $\SU(p)^{m}/(\Z_{p}^{m-1}\times E_{p})$
where $E_{p}\subset\SU(p)$ is isomorphic to the quaternion group
$Q_{8}$ if $p$ is even and the group of triangular $3\times3$ matrices
over the $\Z_{p}$, with 1's on the diagonal when $p$ is odd (either
way it is ``extra-special'' of order $p^{3}$). 
\begin{cor}
\label{cor:exotic-Hom}Let $G_{m,p}$ be the complexification of $K_{m,p}$:
$G_{m,p}\cong\SL(p,\C)^{m}/\Delta(p)$. Then: $\mu_{\mathcal{R}_{\Z^{r}}G_{m,p}}\left(t,u,v\right)=$
\[
=\left(\frac{1}{p!}\prod_{i=2}^{p}(1-(t^{2}uv)^{i})\sum_{g\in S_{p}}\frac{\det(I+tuv\,A_{g})^{r}}{\det(I-t^{2}uv\,A_{g})}\right)^{m}+N(p,m)\prod_{j=2}^{p}\left(1+t^{2j-1}u^{j}v^{j}\right)^{m}.
\]
\end{cor}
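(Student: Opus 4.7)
My plan is to decompose $\Rep_{\Z^r}G_{m,p}$ into its path components, enumerated and described by \cite{ACG} in the compact setting, and compute the mixed Hodge polynomial of each contribution separately.

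First I would invoke the Pettet--Souto deformation retraction \cite{PeSo}, which puts the path components of $\Rep_{\Z^r}G_{m,p}$ in bijection with those of $\hom(\Z^{r},K_{m,p})$. By \cite{ACG}, the latter consists of one identity component together with $N(p,m)$ non-identity components, each homeomorphic to the compact homogeneous space $\SU(p)^{m}/(\Z_{p}^{m-1}\times E_{p})$. Since path components are disjoint closed algebraic subvarieties, the mixed Hodge polynomial is additive across them, so
\[
\mu_{\Rep_{\Z^r}G_{m,p}}(t,u,v)=\mu_{\Rep_{\Z^r}^{0}G_{m,p}}(t,u,v)+\sum_{C}\mu_{C}(t,u,v),
\]
where $C$ ranges over the non-identity components.

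For the identity component, I would apply Corollary \ref{cor-main-1} directly with $G=G_{m,p}$. Since $G_{m,p}$ and $\SL(p,\C)^{m}$ differ only by a finite central quotient, they share the Weyl group $W=S_{p}^{m}$, the rank $m(p-1)$, and the characteristic exponents $2,3,\ldots,p$ each repeated $m$ times. The action $A_{g}$ on $\mathfrak{t}^{*}$ is block-diagonal with $m$ standard $S_{p}$-blocks, so every factor in formula \eqref{eq:mu-hom-formula} splits across the $m$ copies. Collecting terms produces exactly the first summand in the claim.

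For each non-identity component $C$, I would use that the complex analogue of the ACG description identifies $C$ with the algebraic homogeneous space $\SL(p,\C)^{m}/(\Z_{p}^{m-1}\times E_{p})$. Applying Lemma \ref{lem:MHS} then reduces the MHS of $C$ to that of $\SL(p,\C)^{m}$. Combining the K\"unneth formula \eqref{eq:product} with the classical fact that the rational cohomology of $\SL(p,\C)$ is the exterior algebra on transgressed Chern classes $x_{2k-1}$ of pure triple Hodge degree $(2k-1,k,k)$ for $k=2,\ldots,p$, I obtain
\[
\mu_{C}(t,u,v)=\prod_{j=2}^{p}\bigl(1+t^{2j-1}u^{j}v^{j}\bigr)^{m}.
\]
Multiplying by the count $N(p,m)$ of such components yields the second summand, and adding the two produces the claimed formula.

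The main obstacle is the algebraic identification underpinning the non-identity case: one must verify that each non-identity complex path component is genuinely isomorphic, as an algebraic variety, to $\SL(p,\C)^{m}/(\Z_{p}^{m-1}\times E_{p})$, rather than only homotopy equivalent to the compact ACG model. This requires pinning down the stabilizer of a representative representation inside $\SL(p,\C)^{m}$ as $\Z_{p}^{m-1}\times E_{p}$ and confirming that the Pettet--Souto retraction respects this algebraic structure, so that Lemma \ref{lem:MHS} legitimately applies. The remaining steps are then bookkeeping with the established formulas.
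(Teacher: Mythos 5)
Your proposal is correct and follows essentially the same route as the paper: the identity component is handled by Corollary \ref{cor-main-1} using the fact that the Weyl group of $G_{m,p}$ is $S_{p}^{m}$ acting block-diagonally (so formula \eqref{eq:mu-hom-formula} factors as an $m$-th power), and each of the $N(p,m)$ exotic components contributes $\mu(\SL(p,\C))^{m}=\prod_{j=2}^{p}(1+t^{2j-1}u^{j}v^{j})^{m}$ via Lemma \ref{lem:MHS}. The one point you flag as an obstacle --- that the non-identity complex components really are the algebraic homogeneous spaces $\SL(p,\C)^{m}/(\Z_{p}^{m-1}\times E_{p})$ and not merely homotopy equivalent to the compact models --- is indeed the step the paper takes for granted, so your added caution is warranted but does not change the argument.
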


\begin{proof}
The Weyl group $W$ of $G_{m,p}$ is a direct product of $m$ copies
of $S_{p}$: the Weyl group of $\SL(p,\C)$, and its action on the
(dual of the) Lie algebra of maximal torus, is the product action.
Therefore, using again $x=tuv$ and $w=t^{2}uv$, for the identity
component $\mathcal{R}_{\Z^{r}}^{0}G_{m,p}$ we have: 
\begin{eqnarray*}
\mu_{\mathcal{R}_{\Z^{r}}^{0}G_{m,p}}\left(t,u,v\right) & = & \left(\frac{1}{p!}\prod_{i=2}^{p}(1-w^{i})\right)^{m}\!\!\!\!\!\sum_{(g_{1},\cdots,g_{m})\in W}\!\!\frac{\det(I_{p}+x\,A_{g_{1}})^{r}\cdots\det(I_{p}+x\,A_{g_{m}})^{r}}{\det(I_{p}-w\,A_{g_{1}})\cdots\det(I-w\,A_{g_{m}})}\\
 & = & \left(\frac{1}{p!}\prod_{i=2}^{p}(1-w^{i})\right)^{m}\left(\sum_{g\in S_{p}}\frac{\det(I+x\,A_{g})^{r}}{\det(I-w\,A_{g})}\right)^{m}.
\end{eqnarray*}
Now, since the MHS on $\SL(p,\C)^{m}/(\Z_{p}^{m-1}\times E_{p})$
coincides with that of $\SL(p,\C)^{m}$ by Lemma \ref{lem:MHS}, each
of the $N(p,m)$ components, other than $\mathcal{R}_{\Z^{r}}^{0}G_{m,p}$,
contributes $\mu(\SL(p,\C)^{m})=\mu(\SL(p,\C))^{m}=\prod_{j=2}^{p}\left(1+t^{2j-1}u^{j}v^{j}\right)^{m}$. 
\end{proof}
For the character variety, from the fact that each isolated point
adds a constant 1, we have the following corollary: 
\begin{cor}
\label{cor:exotic-CV}The mixed Hodge polynomial of $\mathcal{M}_{\Z^{r}}G_{m,p}$
is: 
\begin{eqnarray*}
\mu_{\mathcal{M}_{\Z^{r}}G_{m,p}}\left(t,u,v\right) & = & \left(\frac{1}{p!}\sum_{g\in S_{p}}\det\left(I+tuv\,A_{g}\right)^{r}\right)^{m}+N(p,m).
\end{eqnarray*}
\end{cor}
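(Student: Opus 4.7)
The plan is to decompose $\mathcal{M}_{\Z^r} G_{m,p}$ into its connected components and apply additivity of the mixed Hodge polynomial over this disjoint union. From the classification of components in \cite{ACG} recalled just before the statement, together with the deformation retraction of \cite{FL2} that carries the component structure from $K_{m,p}$ to its complexification $G_{m,p}$, the variety $\mathcal{M}_{\Z^r} G_{m,p}$ is the disjoint union of $\mathcal{M}_{\Z^r}^0 G_{m,p}$ with $N(p,m)$ isolated reduced points. Each isolated point contributes $1$ to $\mu$, producing the additive constant $N(p,m)$ in the asserted formula; so it suffices to establish the identity-component term.

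For the identity component, I would invoke Corollary \ref{cor-main-2} directly, giving
\[
\mu_{\mathcal{M}_{\Z^r}^0 G_{m,p}}(t,u,v) = \frac{1}{|W|}\sum_{g\in W}\det(I+tuv\,A_g)^r,
\]
where $W$ is the Weyl group of $G_{m,p}$ and $A_g$ its action on $H^1(T,\C)\cong\mathfrak{t}^*$. Since $\Delta(p)$ is a finite central subgroup of $\SL(p,\C)^m$, both the maximal torus's Lie algebra and the Weyl group action on it are unchanged when passing from $\SL(p,\C)^m$ to its quotient $G_{m,p}$; hence $W = S_p^m$ acts on $\mathfrak{t}^*\cong(\mathfrak{t}_{\SL(p,\C)}^*)^{\oplus m}$ by the product of the standard $S_p$-reflection representations on each summand.

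The final step is a direct factorization. For $g = (g_1,\ldots,g_m)\in S_p^m$ the operator $A_g$ is block diagonal with blocks $A_{g_i}$, so $\det(I+tuv\,A_g) = \prod_{i=1}^m \det(I+tuv\,A_{g_i})$. Using $|W| = (p!)^m$, the sum over $W$ decouples and Fubini gives
\[
\frac{1}{(p!)^m}\sum_{(g_1,\ldots,g_m)\in S_p^m}\prod_{i=1}^m \det(I+tuv\,A_{g_i})^r = \left(\frac{1}{p!}\sum_{g\in S_p}\det(I+tuv\,A_g)^r\right)^m,
\]
yielding the claimed expression for the identity-component contribution. There is essentially no obstacle beyond this bookkeeping: the result is a clean corollary of Corollary \ref{cor-main-2} together with the componentwise decomposition from \cite{ACG}; unlike the representation-variety case in Corollary \ref{cor:exotic-Hom}, one does not even need Lemma \ref{lem:MHS} here because the non-identity components are points.
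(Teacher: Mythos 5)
Your proposal is correct and follows essentially the same route as the paper: the paper likewise obtains the identity-component term by applying the Weyl-group factorization $W\cong S_p^m$ with the product action (the "same argument" as its Corollary \ref{cor:exotic-Hom}, which is exactly your block-diagonal decoupling), and adds $N(p,m)$ for the isolated points, each contributing $1$. Your observation that Lemma \ref{lem:MHS} is not needed here, unlike in the representation-variety case, is also consistent with the paper.
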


\begin{proof}
The same argument of Corollary \ref{cor:exotic-Hom}, implies that
the identity component verifies: $\mu_{\mathcal{M}_{\Z^{r}}^{0}G_{m,p}}=(\mu_{\mathcal{M}_{\Z^{r}}\SL(p,\C)})^{m}$,
so the formula is clear. 
\end{proof}
\begin{rem}
Given two reductive groups $G$ and $H$, both the $(G\times H)$-representation
varieties and the $(G\times H)$-character varieties are cartesian
products of the $G$ and $H$ varieties: 
\[
\mathcal{R}_{\Gamma}(G\times H)=\mathcal{R}_{\Gamma}G\times\mathcal{R}_{\Gamma}H,\quad\quad\mathcal{M}_{\Gamma}(G\times H)=\mathcal{M}_{\Gamma}G\times\mathcal{M}_{\Gamma}H.
\]
From Corollaries \ref{cor:exotic-Hom} and \ref{cor:exotic-CV}, the
mixed Hodge polynomial of the identity components of these $G_{m,p}$-character
varieties behaves multiplicatively, even though $\mathcal{R}_{\Z^{r}}^{0}G_{m,p}$
and $\mathcal{M}_{\Z^{r}}^{0}G_{m,p}$ are not cartesian products. 
\end{rem}

\begin{rem}
Since $G$ is connected, $\mathcal{R}_{\Z^{r}}^{0}G=\mathcal{R}_{\Z^{r}}G$
if and only if $\mathcal{M}_{\Z^{r}}^{0}G=\mathcal{M}_{\Z^{r}}G$.
And by \cite{FL2}, $\mathcal{M}_{\Z^{r}}^{0}G=\mathcal{M}_{\Z^{r}}G$
if and only if (a) $r=1$, or (b) $r=2$ and $G$ is simply-connected,
or (c) $r\geq3$ and $G$ is a product of $\SL(n,\C)$'s and $\Sp(n,\C)$'s. 
\end{rem}

\subsection*{Statements \& Declarations}

The authors have no competing interests to declare that are relevant to the content of this article.

\end{document}